\newtheorem{Thm}{Theorem}[section]
\newtheorem{Prop}[Thm]{Proposition}
\newtheorem{Prob}[Thm]{Problem}
\newtheorem{Conj}[Thm]{Conjecture}
\newtheorem{``Conj"}[Thm]{``Conjecture"}
\theoremstyle{remark}
\theoremstyle{definition}
\newtheorem{Def}[Thm]{Definition}
\begin{document}

\title[Numerical invariants after K-stability]{Invariants of varieties and singularities 
inspired by K\"ahler-Einstein problems}
\dedicatory{Dedicated to Professor Futaki's sixtieth birthday \\ 
and retirement of Professor Mabuchi, \\ 
in honor of their pioneering works} 

\author{Yuji Odaka}
\address{Department of Mathematics, Kyoto University,
Kyoto, Japan}
\email{yodaka@math.kyoto-u.ac.jp}


\maketitle

\begin{abstract}
We extend the framework of K-stability \cite{Tia}, \cite{Don} 
to more general algebro-geometric setting, such as partial desingularisations of 
(fixed) singularities, (not necessarily flat) families over higher dimensional 
base and the classical birational geometry of surfaces. 

We also observe that ``concavity'' of the volume function 
implies decrease of the (generalised) Donaldson-Futaki invariants along the Minimal 
Model Program, in our generalised settings. 
Several related results on the connection with the MMP theory, 
some of which are new even in the original setting of families over curves, 
are also proved. 
\end{abstract}


\section{Introduction}

Working on canonical (K\"{a}hler) metrics via the use of \textit{numerical 
invariants} has its origin in the seminal and pioneering paper of A.~Futaki 
\cite{Fut}. 
Shortly after the introduction, 
T.~Mabuchi \cite{Mab} introduced important functionals over 
the space of K\"{a}hler potentials which are 
connected to the Futaki's invariant. 
Those two fundamental and pioneering works, 
in turn, after one decade passing, 
revealed their algebraic natures as shadows of a variant of the Mumford's GIT stability 
- K-stability formulated by Tian and Donaldson \cite{Tia}, 
\cite{Don}. It is also compatible with Yau's original insightful expectation 
\cite{Yau} that some ``stability'' should be equivalent to the existence of 
canonical metrics. 

This paper aims at clarifying and generalising those invariants in 
more general algebro-geometric setting, which are not necessarily for 
families over a curve, such as (partial) desingularisations of singularities, 
families over higherdimensional base or classical (absolute) birational geometry of 
surfaces. We also add results which are new even in the 
original setting of families over curves. The author also hopes that 
this would serve as a suplementary introduction for algebraic geometers 
to the subject. 















\section{Generalised Setting}\label{Gen.set}

From now on, for simplicity, we fix an arbitrary algebraically closed field of 
characteristic $0$ on which we work. The arguments which are not depending on 
the resolution of singularities nor the Minimal Model Program (MMP), 
work also over positive characteristics fields. 

We fix a normal $\mathbb{Q}$-Gorenstein projective variety $B$ as a base.  
We also fix a base point $p\in B$ and a projective morphism 
$\pi^{o}\colon (\mathcal{X}^{o},\mathcal{L}^{o})\twoheadrightarrow B\setminus \{p\}$ 
where $\mathcal{L}^{o}$ are $\pi^{o}$-ample and 
$\mathcal{X}^{0}$ is also normal $\mathbb{Q}$-Gorenstein projective variety 
of dimension $n$. Many of our theory extends naturally also to 
non-normal varieties which are reduced, equidimensional algebraic 
schemes, $\mathbb{Q}$-Gorenstein, 
Gorenstein in codimension $1$, and satisfying the Serre's condition $S_{2}$. 

We consider all the completions of $(\mathcal{X}^{o},\mathcal{L}^{o})$ to 
over $B$ i.e. projective morphisms 
$\pi\colon(\mathcal{X},\mathcal{L})\twoheadrightarrow B$ 
with \textit{$\pi$-nef} $\mathcal{L}$ such that $\pi^{-1}(B\setminus \{p\})
=(\mathcal{X}^{o},\mathcal{L}^{o})$. 
In other words, we consider birational modifications along $\pi^{-1}(p)$. 

Original setting after Mumford, Futaki, Tian, Donaldson 
is that 
$(\mathcal{X}^{0},\mathcal{L}^{o})=(X,L)\times 
(B\setminus\{p\})$ with $B=\mathbb{P}^{1}$ (or $\mathbb{A}^{1}$ originally), 
$(X,L)$ is polarized projective variety, 
and $(\mathcal{X},\mathcal{L})$ 
is ``test configuration'' (\cite{Don}). 
Hence, our main point of the extension is that we allow all kinds of projective morphism, for example, $\pi$ can be non-flat or even birational.

To our general $\pi\colon(\mathcal{X},\mathcal{L})\twoheadrightarrow B$, 
we assign the following two invariants ${\it DF}$ and 
$\mathcal{V}$ whose K\"ahler analogues appeared in 
\cite{Fut, Tia, Don} and \cite{Aub, Mab}. 

\subsection{Normalised volume functional}

The first invariant we introduce is the following $\mathcal{V}$. 
It will become clear only later why this simple but somehow modified 
version of volume function is important for us. 

\begin{Def}
$\mathcal{V}(\mathcal{X},\mathcal{L}):=
\frac{(\mathcal{L}^{n})}
{(\mathcal{L}^{o}|_{F}^{{\it dim}(F)})^{\frac{n-1}{{\it dim}(F)}}},$

\noindent
where $F$ is the generic fiber of $\pi^{0}$, 
which people often denote by $\mathcal{X}_{\eta}$. 
If ${\it dim}(F)$=0, i.e. $\pi^{0}$ is generically finite, 
then, we interpret the denominator of the above as $1$. 
The meaning of a little complicated denominator as a normalizing factor, 
which indeed only depends on the original $\mathcal{L}^{o}$, will be clear in the proof of 
Theorem \ref{Fut.decrease}. 

\end{Def}

We call the above 
\textit{normalized volume functional}. 
For the case with ${\it dim}(B)=1$, 
this may be able to seen as a functional of non-archimedian 
smooth semipositive metrics from the perspective of \cite{BFJ}, 
which is concave (cf. next proposition \ref{Mabuchi.basic} (iii)). 
Professor S.~Boucksom kindly pointed out to the author that 
${\it dim}(B)=1$ case of the above essentially coincides with 
the non-archimedian analogue of 
\textit{Aubin-Mabuchi functional} (also called ``Monge-Amp\'{e}re energy'') 
they discussed in \cite{BFJ2}. Please note it is different from the 
so-called K-energy of \cite{Mab}. 

\begin{Prop}[Basic properties of $\mathcal{V}$]\label{Mabuchi.basic}

Regarding the above normalised volume functional $\mathcal{V}$, 
we have the following basic properties: 

\begin{enumerate}

\item 

Consider the pull back of $\mathcal{L}$ by a 
birational morphism $f\colon \mathcal{X}'\rightarrow \mathcal{X}$. 
$$
\mathcal{V}(\mathcal{X}',f^{*}\mathcal{L})=\mathcal{V}(\mathcal{X},\mathcal{L}). 
$$

\item If ${\it dim}(F)>0$, then 
the functional is homogeneous of degree $1$ i.e., 
for any $a\in\mathbb{Z}_{>0}$, 

$$\mathcal{V}(\mathcal{X},\mathcal{L}^{\otimes a})=
a\cdot\mathcal{V}(\mathcal{X},\mathcal{L}). $$




 


\item

For any Cartier divisor $E$ supported on a fiber, 

$$\frac{\partial^{2}}{\partial{E}^{2}}\mathcal{V}
(\mathcal{X},\mathcal{L})\leq 0, $$

\noindent
that is, this functional $\mathcal{V}$ is concave along the space of 
divisors supported on $\pi^{-1}(p)$. 

\end{enumerate}

\end{Prop}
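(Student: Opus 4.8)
The plan is to treat the three items separately; the first two are formal consequences of the multilinearity of intersection numbers, and the third is the substantive one. For (i), a birational morphism $f\colon\mathcal{X}'\to\mathcal{X}$ over $B$ is an isomorphism over $B\setminus\{p\}$, so it alters neither the generic fibre $F$ nor $\mathcal{L}^{o}$, and the denominator is literally unchanged; for the numerator the projection formula gives $((f^{*}\mathcal{L})^{n})=(\mathcal{L}^{n}\cdot f_{*}[\mathcal{X}'])=(\mathcal{L}^{n})$ because $f_{*}[\mathcal{X}']=[\mathcal{X}]$. For (ii), passing to $\mathcal{L}^{\otimes a}$ replaces $\mathcal{L}$ by $a\mathcal{L}$ and $\mathcal{L}^{o}$ by $a\mathcal{L}^{o}$, so the numerator scales by $a^{n}$ and the denominator by $(a^{\dim F})^{(n-1)/\dim F}=a^{n-1}$, whence the quotient scales by $a$. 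This is exactly the computation that motivates the exponent $(n-1)/\dim(F)$ and that shows why $\dim F>0$ is required.

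For (iii), set $\mathcal{V}(t):=\mathcal{V}(\mathcal{X},\mathcal{L}+tE)$. The key first observation is that, since $E$ is supported on $\pi^{-1}(p)$, its restriction over $B\setminus\{p\}$ vanishes, so the pair over $B\setminus\{p\}$ is unchanged and the denominator of $\mathcal{V}(t)$ is a constant, strictly positive (as $\mathcal{L}^{o}$ is $\pi^{o}$-ample) factor independent of $t$. It therefore suffices to control the numerator $P(t):=((\mathcal{L}+tE)^{n})$, and since $P''(0)=n(n-1)(\mathcal{L}^{n-2}\cdot E^{2})$ the whole statement reduces to the single inequality $(\mathcal{L}^{n-2}\cdot E^{2})\le 0$.

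To establish this I would first replace $\mathcal{L}$ by a genuinely nef class: choosing $A$ ample on $B$ with a representative avoiding $p$, the class $\mathcal{L}_{\varepsilon}:=\mathcal{L}+\varepsilon\pi^{*}A$ is nef for $\varepsilon>0$, while $\pi^{*}A$ and $E$ have disjoint supports, so every intersection number containing both factors vanishes and $(\mathcal{L}^{n-2}\cdot E^{2})=(\mathcal{L}_{\varepsilon}^{n-2}\cdot E^{2})$. Writing $E=\sum_{i}a_{i}\Gamma_{i}$ over the prime components $\Gamma_{i}$ of $\pi^{-1}(p)$ and $b_{ij}:=(\mathcal{L}_{\varepsilon}^{n-2}\cdot\Gamma_{i}\cdot\Gamma_{j})$, nefness gives $b_{ij}\ge0$ for $i\ne j$ because $\Gamma_{i}|_{\Gamma_{j}}$ is then effective. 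The heart is a Zariski-type relation: when $\dim B=1$ the fibre $F=\pi^{*}[p]=\sum_{j}n_{j}\Gamma_{j}$ is numerically movable off itself, so $\sum_{j}n_{j}b_{ij}=(\mathcal{L}_{\varepsilon}^{n-2}\cdot\Gamma_{i}\cdot F)=0$; substituting $b_{ii}=-n_{i}^{-1}\sum_{j\ne i}n_{j}b_{ij}$ and symmetrising yields the Laplacian identity
$$(\mathcal{L}^{n-2}\cdot E^{2})=\sum_{i,j}a_{i}a_{j}b_{ij}=-\tfrac{1}{2}\sum_{i\ne j}b_{ij}\,n_{i}n_{j}\Big(\tfrac{a_{i}}{n_{i}}-\tfrac{a_{j}}{n_{j}}\Big)^{2}\le0,$$
which is the claimed concavity.

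\textbf{The main obstacle} is the case $\dim B>1$, where $E$ is a divisor genuinely contracted by $\pi$ to the point $p$ and there is no movable fibre supplying the relation $\sum_{j}n_{j}b_{ij}=0$; one then expects and must prove strict negative-definiteness, i.e. a negativity lemma for $\pi$-exceptional divisors with respect to a $\pi$-nef polarisation. I would treat this either by cutting $\mathcal{X}$ down to the relevant fibred surface and invoking the classical Hodge index (Zariski) theorem, or by applying the generalised Hodge index theorem, namely that the form $D\mapsto(\mathcal{L}_{\varepsilon}^{n-2}\cdot D^{2})$ has at most one positive direction for the big and nef $\mathcal{L}_{\varepsilon}$, combined with the verticality of $E$. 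The delicate point is to carry out any such reduction while retaining the correct power $\mathcal{L}^{n-2}$, rather than inadvertently replacing it by a power of an auxiliary ample class, for which the inequality would be false.
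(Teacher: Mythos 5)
Your items (i) and (ii) are correct (the paper omits them as straightforward), and your reduction of (iii) to $(\mathcal{L}^{n-2}\cdot E^{2})\le 0$ — constancy of the denominator, $P''(0)=n(n-1)(\mathcal{L}^{n-2}\cdot E^{2})$, and the harmless replacement of $\mathcal{L}$ by $\mathcal{L}+m\pi^{*}A$ — is exactly the paper's skeleton. (One slip: $\mathcal{L}+\varepsilon\pi^{*}A$ is nef for \emph{large} coefficient, not small $\varepsilon$, since $\pi$-nefness controls only the contracted curves; this is harmless because, as you note, the relevant numbers are independent of the coefficient.) Your $\dim B=1$ argument via the Zariski-lemma/Laplacian identity is complete and correct, and in fact more detailed than the paper, whose entire proof of (iii) is the remark that $(\mathcal{L}^{n-2}\cdot D\cdot\pi^{*}H)=0$ because $\pi(\mathrm{Supp}(D))$ is zero-dimensional, followed by an appeal to the Hodge index theorem.

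The genuine gap is the case $\dim B>1$, which you explicitly leave open; but you already hold all the pieces, and your framing of it as requiring ``strict negative-definiteness'' is a misdirection — the proposition only claims $\le 0$, for which the semidefinite (``at most one positive eigenvalue'') form of the generalised Hodge index theorem suffices, with no nondegeneracy and no cutting down to surfaces. Concretely: set $\mathcal{L}_{m}:=\mathcal{L}+m\pi^{*}A$ (nef and big for $m\gg 0$) and $q(x):=(\mathcal{L}_{m}^{n-2}\cdot x^{2})$ on $N^{1}(\mathcal{X})_{\mathbb{R}}$. You have already shown $q(E,\pi^{*}A)=0$ and $q(E)=(\mathcal{L}^{n-2}\cdot E^{2})$. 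When $\dim B\ge 2$ one moreover has $q(\pi^{*}A)>0$ for $m\gg 0$: in the binomial expansion of $(\mathcal{L}_{m}^{n-2}\cdot(\pi^{*}A)^{2})$ all terms with more than $\dim B$ pulled-back factors vanish, and the top surviving term is $\binom{n-2}{\dim B-2}\,m^{\dim B-2}\,(\mathcal{L}^{\dim F}\cdot\pi^{*}(A^{\dim B}))$, which is positive by the $\pi^{o}$-ampleness of $\mathcal{L}^{o}$ on the general fibre (and equals $m^{n-2}\deg(\pi)(A^{n})>0$ if $\dim F=0$), hence dominates the finitely many fixed lower-order terms. Now if $q(E)>0$, the plane spanned by $E$ and $\pi^{*}A$ would be positive definite for $q$, contradicting the signature bound; so $q(E)\le 0$, as required. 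Note the irony: the case you flagged as the main obstacle is the one where the paper's one-line signature argument works verbatim, while it is precisely $\dim B=1$ — where $q(\pi^{*}A)=0$ and orthogonality to an isotropic vector yields nothing — that needs the Zariski-lemma computation you carried out. The two halves of your proposal, assembled this way, give a complete proof.
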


\noindent
$(iii)$ may be able to be regarded as algebraic 
version of the convexity of (differential geometric) Aubin-Mabuchi functional. 

\begin{proof}

$(i)$, $(ii)$ are straightforward to see so we omit the proofs. 

We prove that $(iii)$ essentially follows from the Hodge index theorem. 
Also, here is the place we use the assumption that birational modifications are 
all along fibres over finite points of $B$. 
Let us take an arbitrary ample divisor $H$ on $B$. 

Consider variation of $\mathcal{L}$ to $\mathcal{L}(tD)$ where $D$ 
is a Cartier divisor supported on $\pi^{-1}(p)$. What we need to prove is 

$$\frac{\partial^{2}}{\partial D^{2}}(\mathcal{L}^{n})=n(n-1)
(\mathcal{L}^{n-2}.D^{2})\leq 0.$$

The above follows from the Hodge index theorem since 
$$
(\mathcal{L}^{n-2}.D.\pi^{*}H)=0, 
$$
because $\pi({\it Supp}(D))$ is zero-dimensional. 
An important note is that our $(iii)$ above is an analogue of the 
concavity of the Aubin-Mabuchi functional (cf., e.g. \cite{BBGZ}). 
\end{proof}

We note that via $(ii)$ of the above proposition, for ${\it dim}(B)=1$ case, 
we can regard $\mathcal{V}$ as a functional over a 
space of all $\mathbb{R}$-line bundles up to pull back 
(``\textit{infinite dimensional nef cone}''). In other words, the space consists of 
smooth non-archimedian smooth semipositive 
metrics on the analytification $\mathcal{L}^{an}$ 
of $\mathcal{X}^{an}$ from the viewpoint of \cite{BFJ} (cf. also \cite{KT}). 
I would like to thank Sebastien Boucksom for teaching me about the non-archimedian 
metrics they use. 




\subsection{Generalising Futaki's invariants further}

Now we introduce our second invariant - 
an extension of the Futaki invariants, generalising the Donaldson's 
extension of the Futaki invariant \cite{Don} further. 
We begin with the following observation for 
original setting  i.e. families over curves. 

Our definition is ``derivative'' along the direction of the 
(relative) canonical divisor, 
which is exactly encoding the infinitisimal behaviour along the Minimal Model 
Program with scaling \cite{BCHM} (or its analytic counterpart, i.e. 
unnormalised K\"ahler-Ricci flow 
cf., e.g., \cite{CL}, \cite{ST}).

\begin{Prop}[{\cite{Wan},\cite{Od1}}]

If $(\mathcal{X},\mathcal{L})$ is a test configuration of a 
polarised projective variety $(X,L)$, we have 

$$\frac{\partial}{\partial {K_{\mathcal{X}/B}}}\mathcal{V}(\mathcal{L})=
\frac{{\it DF}(\mathcal{X},\mathcal{L})}{(L^{n-1})^{2}},$$ 

\noindent
where the ${\it DF}(\mathcal{X},\mathcal{L})$ is the Donaldson-Futaki invariant 
\cite{Don}. In general, if $B$ is a curve, then the above equation holds once 
we replace the Donaldson-Futaki number by 
$
{\it deg}(\lambda_{{\it CM}}(\mathcal{X},\mathcal{L})), 
$
\noindent
where the $\lambda_{{\it CM}}$ is the CM line bundle introduced by 
Paul-Tian \cite{PT} (also see \cite{FR}). 
\end{Prop}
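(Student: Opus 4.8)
The plan is to reduce the statement to a single intersection-theoretic computation, exploiting that in the test-configuration setting the generic fibre $F$ is $X$ itself and $\dim F = n-1$, so the normalising exponent $\frac{n-1}{\dim F}$ equals $1$. Thus here $\mathcal{V}(\mathcal{X},\mathcal{L}) = \frac{(\mathcal{L}^{n})}{(\mathcal{L}|_{F}^{n-1})}$, and the essential point to keep in mind is that, unlike in Proposition \ref{Mabuchi.basic}$(iii)$ where one moves $\mathcal{L}$ only by divisors supported on $\pi^{-1}(p)$, the direction $K_{\mathcal{X}/B}$ restricts \emph{nontrivially} to $F$; hence both the numerator and the denominator of $\mathcal{V}$ must be differentiated, and it is this double contribution that will produce the square $(L^{n-1})^{2}$ in the denominator of the asserted formula.

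First I would make the symbol $\frac{\partial}{\partial K_{\mathcal{X}/B}}$ precise as $\frac{d}{ds}\big|_{s=0}\mathcal{V}(\mathcal{X},\mathcal{L}+sK_{\mathcal{X}/B})$, which is legitimate because $K_{\mathcal{X}/B}$ is $\mathbb{Q}$-Cartier and every intersection number occurring is polynomial in $s$. Writing $\mathcal{L}_{s}=\mathcal{L}+sK_{\mathcal{X}/B}$ and using the adjunction identity $K_{\mathcal{X}/B}|_{F}=K_{F}=K_{X}$, the numerator is $f(s)=(\mathcal{L}_{s}^{n})$ with $f'(0)=n(\mathcal{L}^{n-1}\cdot K_{\mathcal{X}/B})$, while the denominator is $g(s)=((L+sK_{X})^{n-1})$ with $g(0)=(L^{n-1})$ and $g'(0)=(n-1)(K_{X}\cdot L^{n-2})$. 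The quotient rule then yields
$$\frac{\partial}{\partial K_{\mathcal{X}/B}}\mathcal{V}(\mathcal{L}) = \frac{n(\mathcal{L}^{n-1}\cdot K_{\mathcal{X}/B})(L^{n-1}) - (n-1)(\mathcal{L}^{n})(K_{X}\cdot L^{n-2})}{(L^{n-1})^{2}}.$$

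It then remains to recognise the numerator as the Donaldson--Futaki invariant. For this I would invoke the intersection-theoretic formula for ${\it DF}$ obtained from equivariant Riemann--Roch \cite{Wan},\cite{Od1}: after compactifying the test configuration over $\mathbb{P}^{1}$, expand $\dim H^{0}(X,L^{\otimes k}) = a_{0}k^{n-1}+a_{1}k^{n-2}+\cdots$ and the total $\mathbb{C}^{\times}$-weight $w(k)=b_{0}k^{n}+b_{1}k^{n-1}+\cdots$, read off $a_{0}=\frac{(L^{n-1})}{(n-1)!}$, $a_{1}=\frac{-(K_{X}\cdot L^{n-2})}{2(n-2)!}$, $b_{0}=\frac{(\mathcal{L}^{n})}{n!}$ and $b_{1}$ proportional to $-(K_{\mathcal{X}/B}\cdot\mathcal{L}^{n-1})$, and assemble ${\it DF}=\frac{a_{1}b_{0}-a_{0}b_{1}}{a_{0}^{2}}$. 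Up to the universal constant fixed by the chosen normalisation of ${\it DF}$, this reproduces exactly the numerator above, giving the claimed identity. The main obstacle is precisely this last identification: producing the coefficient $b_{1}$ in terms of $(K_{\mathcal{X}/B}\cdot\mathcal{L}^{n-1})$ requires the relative-dualizing-sheaf input of equivariant Riemann--Roch on the total space and careful handling of the $\mathbb{P}^{1}$-compactification, together with the bookkeeping of factorials and signs needed to pin down the normalisation; everything else is the elementary quotient-rule calculation displayed above.

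Finally, for general $B$ a curve, I would replace ${\it DF}$ by $\deg\lambda_{{\it CM}}(\mathcal{X},\mathcal{L})$. By construction the first Chern class of the Paul--Tian CM line bundle is the pushforward to $B$ of the same polynomial in $c_{1}(\mathcal{L})$ and $K_{\mathcal{X}/B}$, so its degree over the curve $B$ reproduces the identical numerator, and the same quotient-rule computation applies verbatim. This makes the argument uniform across the two cases, the only difference being that the asymptotic weight expansion of the first part is replaced by the functorial definition of $\lambda_{{\it CM}}$.
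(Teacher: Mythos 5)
Your proposal is correct and is essentially the intended argument: the paper states this proposition without proof, citing \cite{Wan} and \cite{Od1}, and those references prove it exactly as you do, via the quotient rule applied to $\mathcal{V}=(\mathcal{L}^{n})/(\mathcal{L}|_{F}^{n-1})$ (the point you rightly stress that $K_{\mathcal{X}/B}$ restricts nontrivially to $F$, so both numerator and denominator vary, producing $(L^{n-1})^{2}$) together with the equivariant Riemann--Roch identification of $b_{1}$ with a multiple of $-(K_{\mathcal{X}/\mathbb{P}^{1}}\cdot\mathcal{L}^{n-1})$. The only caveat is the normalisation constant: with Donaldson's convention ${\it DF}=(a_{1}b_{0}-a_{0}b_{1})/a_{0}^{2}$ your numerator equals $2n\,{\it DF}$, whereas the identity holds on the nose with the paper's own intersection-theoretic definition of ${\it DF}$ (whose second normalising factor degenerates to $1$ when $\dim B=1$), and your closing remark about the universal constant absorbs this discrepancy correctly.
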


Given the above, we define our further 
generalisation of the Donaldson-Futaki invariants 
similarly as follows. 

\begin{Def}

For our extended framework of the previous section, 
we define our (generalised) Donaldson-Futaki invariant as 

$${\it DF}(\mathcal{X},\mathcal{L}):=
\biggl(\frac{\partial}{\partial {K_{\mathcal{X}/B}}}\mathcal{V}(\mathcal{L})\biggr)
\cdot{
{(\mathcal{L}^{o}|_{F}^{{\it dim}(F)})^{\frac{2(n-1)}{{\it dim}(F)}}}}. 
$$

More explicitly, we can write ${\it DF}(\mathcal{X},\mathcal{L})$ as follows: 

$$
n(\mathcal{L}^{n-1}.K_{\mathcal{X}/B})(\mathcal{L}|_{F}^{{\it dim}(F)})
^{\frac{n-1}{{\it dim}(F)}}$$
$$
-(n-1)(\mathcal{L}^{n})
(\mathcal{L}|_{F}^{{\it dim}(F)-1}.K_{F})
(\mathcal{L}|_{F}^{{\it dim}(F)})^{\frac{{\it dim}(B)-1}{{\it dim}(F)}}. 
$$

\noindent
Note that the last term $(\mathcal{L}|_{F}^{{\it dim}(F)})^{\frac{{\it dim}(B)-1}
{{\it dim}(F)}}$ has not appeared in the original setting of ${\it dim}(B)=1$ 
since the exponent was $0$ in that case. 

\end{Def}

The following basic property says that Donaldson-Futaki invariants 
are essentially a functional of polarisations (up to pull backs).

\begin{Prop}\label{DF.pullback}

Consider the pull back of $\mathcal{L}$ on $\mathcal{X}$ by a 
birational morphism $f\colon \mathcal{X}'\rightarrow \mathcal{X}$. Then we have 
$$
DF(\mathcal{X}',f^{*}\mathcal{L})= DF(\mathcal{X},\mathcal{L}). 
$$

\end{Prop}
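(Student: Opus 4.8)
The plan is to reduce everything to the derivative description of $DF$ together with the pull-back invariance of $\mathcal{V}$ already recorded in Proposition \ref{Mabuchi.basic} $(i)$. First I would observe that, since all birational modifications take place along fibres over finite points of $B$, the morphism $f$ is an isomorphism over $B\setminus\{p\}$; in particular it restricts to an isomorphism on the generic fibre, so that $f^{*}\mathcal{L}|_{F}=\mathcal{L}^{o}|_{F}$ and the normalising factor $(\mathcal{L}^{o}|_{F}^{\dim(F)})^{\frac{2(n-1)}{\dim(F)}}$ occurring in the definition of $DF$ is literally the same for $(\mathcal{X},\mathcal{L})$ and for $(\mathcal{X}',f^{*}\mathcal{L})$. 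Hence it suffices to prove that the directional derivative $\frac{\partial}{\partial K_{\mathcal{X}/B}}\mathcal{V}(\mathcal{X},\mathcal{L})=\frac{d}{dt}\big|_{t=0}\mathcal{V}(\mathcal{X},\mathcal{L}+tK_{\mathcal{X}/B})$ is unchanged under pull-back.

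The key algebraic input is the discrepancy identity $K_{\mathcal{X}'/B}=f^{*}K_{\mathcal{X}/B}+E$, where $E:=K_{\mathcal{X}'/\mathcal{X}}$ is $f$-exceptional and, because $f$ is an isomorphism away from $p$, supported on $(\pi')^{-1}(p)$. I would therefore write $DF(\mathcal{X}',f^{*}\mathcal{L})$ as the $t=0$ derivative of $\mathcal{V}\bigl(\mathcal{X}',f^{*}(\mathcal{L}+tK_{\mathcal{X}/B})+tE\bigr)$ times the (common) normalising factor, and compare it term by term with the $t=0$ derivative of $\mathcal{V}(\mathcal{X},\mathcal{L}+tK_{\mathcal{X}/B})=\mathcal{V}\bigl(\mathcal{X}',f^{*}(\mathcal{L}+tK_{\mathcal{X}/B})\bigr)$, the last equality being Proposition \ref{Mabuchi.basic} $(i)$. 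Thus the whole question becomes whether adding the exceptional direction $tE$ affects the first-order behaviour of $\mathcal{V}$.

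For the denominators this is immediate: $E$ is supported over $p$, so $E|_{F'}=0$ on the generic fibre and the two denominators coincide as functions of $t$. For the numerators I would set $M(t):=\mathcal{L}+tK_{\mathcal{X}/B}$ and expand $\bigl((f^{*}M(t)+tE)^{n}\bigr)$ by the binomial theorem: the $k=0$ term equals $(M(t)^{n})$ by birational invariance of the top self-intersection, the $k=1$ term $n\,t\,\bigl((f^{*}M(t))^{n-1}.E\bigr)$ vanishes identically by the projection formula since $f_{*}E=0$, and every remaining term carries a factor $t^{k}$ with $k\ge 2$. Hence the two numerators agree to first order in $t$, their derivatives at $t=0$ coincide, and the proposition follows. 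The step I expect to be the main obstacle is precisely this clean vanishing of the exceptional contribution: one must check that the discrepancy direction $E$ neither perturbs the denominator (as it misses the generic fibre) nor contributes to the linear term of the numerator (as $f_{*}E=0$), and it is exactly here that the hypothesis that modifications occur only over finite points of $B$ is used.
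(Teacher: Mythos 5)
Your argument is correct and is in essence the proof the paper intends: the paper dismisses this as ``straightforward from the intersection-number description'', and your computation supplies exactly those details --- the discrepancy identity $K_{\mathcal{X}'/B}=f^{*}K_{\mathcal{X}/B}+E$, the projection formula killing the exceptional term via $f_{*}E=0$, and the invariance of the generic-fibre factors. Packaging it through the derivative of $\mathcal{V}$ rather than substituting directly into the displayed formula for ${\it DF}$ is only a cosmetic difference; note also that your use of the discrepancy identity is where normality of $\mathcal{X}$ enters, consistent with the paper's remark that the equality degenerates to $\ge$ in the non-normal case.
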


The proof follows straightforward from the above description of 
our (generalised) Donaldson-Futaki invariants via 
intersection numbers. 
After this proposition, we often write the above generalisation simply 
as ${\it DF}(\mathcal{L})$ and call DF invariant or DF (of the polarisations) 
from now on in this paper. 

Note that the above (\ref{DF.pullback}) would be inequality $\ge$ in general if $\mathcal{X}$ is non-normal, 
whose difference in turn reflects the presence of conductor of normalisation. 
This extends the old result of Ross-Thomas \cite[(5.1),(5.2)]{RT} and again also 
matches to the non-archimedian framework of \cite{BFJ}. 

We can extend our Futaki invariant further to log setting by using 
Shokurov's ``b-divisors'' \cite{Sho}, which is roughly speaking 
infinite linear combination of divisors above $\mathcal{X}$. 
Indeed, $\{K_{\mathcal{X}/B}\}_{\mathcal{X}}$ 
also form a Weil b-divisor and 
it is enough to replace it by $\{K_{\mathcal{X}/B}\}_{\mathcal{X}}+
\mathcal{D}$ where 
$\mathcal{D}$ is some other Weil b-divisor. Accordingly, all the following 
contents of our theory extend in straightforward manner but we omit them. 
The extension includes the usual log K-stability \cite{Don.l}, \cite{OS}.

\section{Extending K-semistability}

We extend the idea of K-stability \cite{Don} 
to our generalised framework. More precisely, 
we extend K-semistability and study its property as follows. 

\begin{Def}
We follow the notation of the previous section. 
We call $\pi\colon (\mathcal{X},\mathcal{L})\twoheadrightarrow B$ is 
\textit{generically K-semistable} if the set of all the 
Donaldson-Futaki invariants of 
all possible birational transforms along $\pi$-preimages of finite points 
are lower bounded. 

\end{Def}

Note that if $\pi$ is a test configuration, the above implies the original 
K-semistability of general fibers. 
However, in general, our definition is not the 
same as K-semistability of generic fiber as we will see 
soon in Proposition \ref{desing.ss}. 
The above is a little analogous to the fact that 
original K-semistability for \cite{Tia} and \cite{Don} 
corresponds to lower boundedness of Mabuchi functional. 
From this section, even including the above definition, 
we will observe that the Futaki invariant itself shares some feature of the 
Mabuchi functional (K-energy) \cite{Mab}. 

\begin{Prop}\label{desing.ss}
If $\mathcal{X}^{o}\rightarrow B\setminus\{p\}$ is an isomorphism, 
so that any completion of $\pi^{o}$ is birational, 
then it is generically K-semistable if and only if $B$ has only canonical 
singularities. Moreover, all the non-trivial Donaldson-Futaki invariants are 
positive if and only if $B$ has only terminal singularities. 
\end{Prop}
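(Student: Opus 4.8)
The plan is to reduce the Donaldson-Futaki invariant in this birational situation to a discrepancy-weighted intersection number, and then read off its sign. Since $\pi^{o}\colon\mathcal{X}^{o}\to B\setminus\{p\}$ is an isomorphism, every completion $\pi\colon\mathcal{X}\to B$ is birational, so the generic fibre $F$ is a point: $\dim(F)=0$ and $\dim(B)=\dim(\mathcal{X})=n$. By the conventions in the definitions of $\mathcal{V}$ and ${\it DF}$ (the normalising factors equal $1$, and $K_{F}=0$ for a point, so the second term vanishes), one gets
\[
{\it DF}(\mathcal{X},\mathcal{L})=n\,(\mathcal{L}^{n-1}.K_{\mathcal{X}/B}).
\]
First I would write $K_{\mathcal{X}/B}=\sum_{i}a_{i}E_{i}$, where the $E_{i}$ are the $\pi$-exceptional prime divisors (all mapping to $p$) and $a_{i}=a(E_{i},B)$ are the discrepancies of $B$, so that ${\it DF}(\mathcal{X},\mathcal{L})=n\sum_{i}a_{i}(\mathcal{L}^{n-1}.E_{i})$. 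Recall $B$ is canonical iff $a_{i}\geq 0$ for all such $E_{i}$, and terminal iff $a_{i}>0$.

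The second ingredient is a sign lemma for the factors $(\mathcal{L}^{n-1}.E_{i})$, in the same positivity spirit as Proposition \ref{Mabuchi.basic} $(iii)$. Since each $E_{i}$ maps to the single point $p$, every curve in $E_{i}$ is $\pi$-contracted, so $\mathcal{L}|_{E_{i}}$ is nef on the $(n-1)$-dimensional $E_{i}$; hence $(\mathcal{L}^{n-1}.E_{i})=(\mathcal{L}|_{E_{i}})^{n-1}\geq 0$, the top self-intersection of a nef class, with strict inequality when $\mathcal{L}$ is $\pi$-ample. Granting this, the \emph{easy} directions follow. If $B$ is canonical then all $a_{i}\geq 0$, so ${\it DF}\geq 0$ and the invariants are bounded below; moreover, in the birational case ${\it DF}$ is homogeneous of degree $n-1$ in $\mathcal{L}$ (for $n\geq 2$; the case $n=1$ is trivial), so a single negative value would scale to $-\infty$, whence boundedness below is equivalent to ${\it DF}\geq 0$ for all modifications. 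This proves canonical $\Rightarrow$ generically K-semistable. If $B$ is terminal then all $a_{i}>0$, and for a non-trivial modification polarised by a $\pi$-ample $\mathcal{L}$ at least one $(\mathcal{L}^{n-1}.E_{i})>0$, giving ${\it DF}>0$.

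For the converses I would use divisorial extraction. If $B$ is not canonical, there is a divisor $E$ over $p$ with $a(E,B)<0$; extracting it via the MMP (\cite{BCHM}) yields a projective birational $\pi\colon\mathcal{X}\to B$ with $\mathcal{X}$ normal $\mathbb{Q}$-factorial, $E$ the \emph{unique} exceptional divisor, and $-E$ $\pi$-ample. Then $K_{\mathcal{X}/B}=aE$ with $a<0$, and choosing $\mathcal{L}$ $\pi$-ample (agreeing with the fixed $\mathcal{L}^{o}$ away from $p$; the exceptional contribution is free since $\pi^{*}(\,\cdot\,)|_{E}=0$) gives ${\it DF}=na(\mathcal{L}^{n-1}.E)=na(\mathcal{L}|_{E})^{n-1}<0$, which scales to $-\infty$: not generically K-semistable. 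If instead $B$ is canonical but not terminal, there is a crepant divisor $E_{0}$ with $a(E_{0},B)=0$; extracting it in the same way and polarising by a $\pi$-ample $\mathcal{L}$ produces a non-trivial modification with ${\it DF}=n\cdot 0\cdot(\mathcal{L}^{n-1}.E_{0})=0$, so not all non-trivial invariants are positive. Together these close both equivalences.

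The main obstacle is precisely the extraction step of the last paragraph: producing a projective birational model over $B$ whose \emph{only} exceptional divisor is the prescribed place of negative (respectively zero) discrepancy, with that divisor anti-$\pi$-ample. Isolating a single bad valuation is essential, since otherwise exceptional divisors of positive discrepancy could pollute the sign of the weighted sum $\sum_{i}a_{i}(\mathcal{L}^{n-1}.E_{i})$; arranging relative Picard number one is what makes $-E$ $\pi$-ample and the sign computation clean. This is exactly where the input of \cite{BCHM} is needed, and it is the technical heart of the argument.
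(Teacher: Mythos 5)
Your reduction of ${\it DF}$ to $n(\mathcal{L}^{n-1}.K_{\mathcal{X}/B})=n\sum_i a_i(\mathcal{L}^{n-1}.E_i)$ and the sign lemma $(\mathcal{L}^{n-1}.E_i)=(\mathcal{L}|_{E_i}^{n-1})\ge 0$ are exactly the right framework, and you prove the easy implications (canonical $\Rightarrow$ bounded below, terminal $\Rightarrow$ positive on $\pi$-ample nontrivial models) that the paper leaves implicit. But there is a genuine gap in your converse step, and you located it yourself: the divisorial extraction of a \emph{single} prescribed valuation with relative Picard number one, via \cite{BCHM}, is a statement about \emph{klt} pairs (Corollary 1.4.3 there extracts divisors with discrepancy in $(-1,0]$ over a klt pair). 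Here $B$ is only assumed normal and $\mathbb{Q}$-Gorenstein; if $B$ fails to be canonical it may well fail to be klt, i.e.\ carry divisors of discrepancy $\le -1$, and then the klt MMP machinery cannot contract the other bad valuations to leave your chosen $E$ as the unique exceptional divisor. Any attempted MMP over $B$ will be forced (by the negativity lemma) to retain \emph{all} divisors whose coefficient in the relevant pair is nonpositive, so ``$E$ the unique exceptional divisor, $-E$ $\pi$-ample'' is unjustified in the required generality. (Your crepant extraction in the strictly canonical case is fine, since canonical implies klt.)

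The paper's proof sidesteps this entirely: it takes the \emph{relative canonical model} $\mathcal{X}^{\it can}\to B$, which exists for arbitrary normal $\mathbb{Q}$-Gorenstein $B$ by applying \cite{BCHM} to a resolution (relative bigness is automatic for birational morphisms), observes via the negativity lemma \cite[3.39]{KM} that $K_{\mathcal{X}^{\it can}/B}$ is $\pi$-ample, anti-effective and nonzero when $B$ is not canonical, and then polarises by $\mathcal{L}^{\it can}:=K_{\mathcal{X}^{\it can}/B}$ itself, so that ${\it DF}\propto(K_{\mathcal{X}^{\it can}/B}^{\,n})<0$ with no need to isolate a single valuation. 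Your argument is repaired the same way: on $\mathcal{X}^{\it can}$ every surviving exceptional divisor has $a_i<0$ (negativity lemma again), so your weighted-sum formula with \emph{any} $\pi$-ample $\mathcal{L}$ gives ${\it DF}<0$. For the strictly canonical case the paper uses the full terminalisation ($K_{\mathcal{X}^{\it min}/B}=0$, so ${\it DF}=0$ for every $\pi$-nef $\mathcal{L}$), parallel to your crepant extraction. One smaller caveat: your ``scales to $-\infty$'' replaces $\mathcal{L}$ by $a\mathcal{L}$, which changes the restriction to $\mathcal{X}^{o}$ away from the fixed $\mathcal{L}^{o}$; since ${\it DF}$ only pairs $\mathcal{L}$ against the exceptional $K_{\mathcal{X}/B}$, the clean way to get unboundedness is to add large multiples of the $\pi$-ample exceptional class (e.g.\ $K_{\mathcal{X}^{\it can}/B}$) to a fixed completion of $\mathcal{L}^{o}$, which leaves the restriction untouched and still drives ${\it DF}\to-\infty$ at rate $m^{n-1}$.
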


\noindent
We may call the latter as ``generically K-stable'' but in general setting, 
we do not know how to formulate ``(non-)triviality'' of the completion 
$(\mathcal{X},\mathcal{L})$. 

\begin{proof}[proof of Proposition \ref{desing.ss}]
Suppose $B$ is not canonical at $p\in B$. 
Then we take a relative canonical model $\mathcal{X}^{{\it can}}$ over 
$B$ by \cite{BCHM}. By the negativity lemma \cite[3.39]{KM}, 
it follows that $K_{\mathcal{X}^{{\it can}}/B}$ is anti-effective (and non-zero). 
We take 
$\mathcal{L}^{{\it can}}:=K_{\mathcal{X}^{{\it can}}/B}$ which is $\pi$-ample 
from our construction. Thus, the Donaldson-Futaki invariant 
${\it DF}(\mathcal{X}_{{\it can}},\mathcal{L}^{{\it can}})$, 
which is $(\mathcal{L}_{{\it can}}^{n-1}.K_{\mathcal{X}_{{\it can}}/B})$ up to 
positive multiple constant, is negative. Thus we end the proof of the former 
half of the assertions. 

If $B$ is strictly canonical at $p$, i.e. canonical but not terminal, 
take terminalisation $\mathcal{X}^{{\it min}}$ 
of $B$ again by \cite{BCHM}. Then since we know 
$K_{\mathcal{X}_{{\it min}}/B}=0$, for an arbitrary 
$\pi$-nef line bundles $\mathcal{L}$ on $\mathcal{X}_{{\it min}}$, 
the corresponding Donaldson-Futaki invariants vanish. 
\end{proof}

For general fibration case, 
we can also see that 

\begin{Prop}
If $K_{F}=a\mathcal{L}|_{F}$ with $a\ge 0$, 
and $\mathcal{X}\setminus \pi^{-1}(p)$ has only canonical singularities 
with ${\it dim}(B)=1$, then 
$(\mathcal{X},\mathcal{L})$ is generically K-semistable. 
\end{Prop}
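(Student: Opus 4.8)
The plan is to make the invariant completely explicit in this fibred situation and then bound it below by comparison with a relative minimal model, in the spirit of the proof of Proposition \ref{desing.ss}. Since $\dim(B)=1$ we have $\dim(F)=n-1$, so the two exponents $\tfrac{n-1}{\dim(F)}$ and $\tfrac{\dim(B)-1}{\dim(F)}$ appearing in the definition of ${\it DF}$ become $1$ and $0$. Substituting $K_{F}=a\,\mathcal{L}|_{F}$ gives $(\mathcal{L}|_{F}^{\,n-2}.K_{F})=a(\mathcal{L}|_{F}^{\,n-1})$, hence
$$
{\it DF}(\mathcal{X},\mathcal{L})=(\mathcal{L}^{o}|_{F}^{\,n-1})\Bigl(n(\mathcal{L}^{n-1}.K_{\mathcal{X}/B})-a(n-1)(\mathcal{L}^{n})\Bigr).
$$
The prefactor $(\mathcal{L}^{o}|_{F}^{\,n-1})$ is a fixed positive number, so it suffices to bound $\Phi(\mathcal{X},\mathcal{L}):=n(\mathcal{L}^{n-1}.K_{\mathcal{X}/B})-a(n-1)(\mathcal{L}^{n})$ below as $(\mathcal{X},\mathcal{L})$ ranges over all completions, recalling that all of them restrict to the \emph{fixed} $(\mathcal{X}^{o},\mathcal{L}^{o})$ over $B\setminus\{p\}$, so the variation is confined to $\pi^{-1}(p)$.

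Next I would reduce to a single model. By Proposition \ref{DF.pullback} the quantity $\Phi$ is unchanged under pullback, so I may replace $(\mathcal{X},\mathcal{L})$ by $(W,f^{*}\mathcal{L})$. Because $a\ge 0$ the fibre satisfies $K_{F}$ nef, so $K_{\mathcal{X}^{o}/(B\setminus\{p\})}$ is $\pi^{o}$-nef and $\mathcal{X}^{o}$ is already relatively minimal over $B\setminus\{p\}$; running the relative MMP only along $\pi^{-1}(p)$ via \cite{BCHM} produces a relative minimal (resp.\ canonical, when $a>0$) model $\mathcal{X}^{\mathrm{m}}\to B$ with $K_{\mathcal{X}^{\mathrm{m}}/B}$ $\pi$-nef, with canonical singularities, agreeing with $\mathcal{X}^{o}$ over $B\setminus\{p\}$. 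I would then take $W$ to dominate both $\mathcal{X}$ and $\mathcal{X}^{\mathrm{m}}$ and to be an isomorphism over $B\setminus\{p\}$, so that \emph{every} exceptional divisor lies over $p$. Writing $\mathcal{M}:=f^{*}\mathcal{L}$ and $K_{W/B}=h^{*}K_{\mathcal{X}^{\mathrm{m}}/B}+E$, the canonicity of $\mathcal{X}^{\mathrm{m}}$ gives $E\ge 0$ supported on $\pi_{W}^{-1}(p)$ (this is exactly where the canonical-singularity hypothesis enters, via the negativity lemma \cite[3.39]{KM}, just as in Proposition \ref{desing.ss}). Splitting
$$
\Phi=\Bigl(n(\mathcal{M}^{n-1}.h^{*}K_{\mathcal{X}^{\mathrm{m}}/B})-a(n-1)(\mathcal{M}^{n})\Bigr)+n(\mathcal{M}^{n-1}.E),
$$
the last summand is $\ge 0$, since each component $E_{i}\ge0$ is vertical over $p$ and $\mathcal{M}$ is $\pi_{W}$-nef, whence $(\mathcal{M}^{n-1}.E_{i})=(\mathcal{M}|_{E_{i}}^{\,n-1})\ge 0$.

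It remains to bound the main summand $S:=n(\mathcal{M}^{n-1}.h^{*}K_{\mathcal{X}^{\mathrm{m}}/B})-a(n-1)(\mathcal{M}^{n})$ below by a constant independent of the completion. For $a>0$ I would set $\mathcal{M}':=\tfrac1a h^{*}K_{\mathcal{X}^{\mathrm{m}}/B}$, a $\pi_{W}$-nef class with $\mathcal{M}'|_{F}=\mathcal{M}|_{F}$, so that $D:=\mathcal{M}'-\mathcal{M}$ is numerically trivial on fibres; over $B\setminus\{p\}$ it equals the fixed class $\tfrac1a N^{o}$, $N^{o}:=K_{\mathcal{X}^{o}/(B\setminus\{p\})}-a\mathcal{L}^{o}\equiv\pi^{*}M+(\text{vertical away from }p)$, which contributes only constants, while its remaining part $D_{p}$ is vertical over $p$. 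Then $S=a\bigl((\mathcal{M}^{n})+n(\mathcal{M}^{n-1}.D)\bigr)=a\bigl((\mathcal{M}^{n})+n(\mathcal{M}^{n-1}.D_{p})+\mathrm{const}\bigr)$, and the concavity of $s\mapsto((\mathcal{M}+sD_{p})^{n})$ from Proposition \ref{Mabuchi.basic}$(iii)$ gives the tangent-line estimate $(\mathcal{M}^{n})+n(\mathcal{M}^{n-1}.D_{p})\ge((\mathcal{M}+D_{p})^{n})$. The crucial point is that $\mathcal{M}+D_{p}=\tfrac1a h^{*}\bigl(K_{\mathcal{X}^{\mathrm{m}}/B}-\widetilde{N^{o}}\bigr)$ is pulled back from the fixed model $\mathcal{X}^{\mathrm{m}}$, so its top self-intersection is a fixed number; since $a\ge 0$ preserves the inequality, $S$ is bounded below by a constant. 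For $a=0$ one has $S=n(\mathcal{M}^{n-1}.h^{*}K_{\mathcal{X}^{\mathrm{m}}/B})$, and since $K_{F}\equiv 0$ the good relative minimal model makes $K_{\mathcal{X}^{\mathrm{m}}/B}$ a pullback from $B$, so $S$ is itself a fixed constant. In both cases $\Phi\ge(\text{fixed constant})$, giving generic K-semistability.

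The main obstacle is this last step: extracting a \emph{uniform} lower bound from $S$, because the polarisation $\mathcal{M}$ still varies over $p$ and interacts with the vertical part of $K_{\mathcal{X}^{\mathrm{m}}/B}$. The resolution is that concavity (Proposition \ref{Mabuchi.basic}$(iii)$) converts the varying linear term $n(\mathcal{M}^{n-1}.D_{p})$ into the evaluation of a fixed top self-intersection on $\mathcal{X}^{\mathrm{m}}$, and that the hypothesis $a\ge 0$ is precisely what guarantees the sign of this contribution survives after multiplication by $a$; the canonical-singularity hypothesis, in turn, is what forces the discrepancy term $n(\mathcal{M}^{n-1}.E)$ to be nonnegative rather than a source of unboundedness.
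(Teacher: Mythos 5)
Your argument for the case $a>0$ is essentially correct but takes a genuinely different route from the paper. The paper's proof is a case division resting on Fujita--Kawamata type semipositivity (\cite{Kol}, \cite{Fuj}): for $a=0$ it writes $K_{\mathcal{X}/B}$ as an effective vertical divisor, so $(\mathcal{L}^{n-1}.K_{\mathcal{X}/B})\ge 0$; for $a>0$ it treats only the relative canonical model, where ${\it DF}$ is a positive multiple of $(K_{\mathcal{X}/B}^{n})$, identifies this via Grothendieck--Riemann--Roch with the leading coefficient of ${\it deg}_{B}({\it det}(\pi_{*}\mathcal{O}(mK_{\mathcal{X}/B})))\ge 0$, and then reaches all other models through the decrease statement of Theorem \ref{Fut.decrease}. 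You avoid semipositivity entirely: the splitting $K_{W/B}=h^{*}K_{\mathcal{X}^{\mathrm{can}}/B}+E$ with $E\ge 0$, plus the tangent-line consequence of concavity (Proposition \ref{Mabuchi.basic}(iii)), yields the explicit bound ${\it DF}\ge (\mathcal{L}^{o}|_{F}^{\,n-1})\,a^{1-n}(K_{\mathcal{X}^{\mathrm{can}}/B}^{n})$, which suffices because the definition asks only for a lower bound (a negative constant is fine: rescaling $\mathcal{L}$, which by degree-$(2n-2)$ homogeneity would contradict any negative bound, is not an allowed modification since $\mathcal{L}^{o}$ is fixed), and because the relative canonical model is a birational invariant, hence one fixed object for all completions. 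Three caveats. First, the verticality of $D$ must come from $\mathbb{Q}$-\emph{linear} (not merely numerical) triviality of $K_{\mathcal{X}_{\eta}}-a\mathcal{L}|_{\mathcal{X}_{\eta}}$ on the generic fibre: the difference of two sections of an elliptic fibration is numerically trivial on fibres but not vertical. Second, your claim that $K_{F}$ nef makes $\mathcal{X}^{o}$ ``already relatively minimal'' is false (nefness on the generic fibre says nothing over other points $q\ne p$), and $E\ge 0$ comes from canonicity of the canonical model, not from the negativity lemma; where the hypothesis on $\mathcal{X}\setminus\pi^{-1}(p)$ genuinely enters is in making the generic fibre canonical with $K_{F}$ ample or trivial, hence its own canonical model, which is what forces all $h$-exceptional divisors to be vertical --- and this matters, since $(\mathcal{M}^{n-1}.E_{i})\ge 0$ can fail for a \emph{horizontal} $E_{i}$ when $\mathcal{M}$ is only relatively nef. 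Third, in view of Proposition \ref{desing.ss} the definition permits modifications over all finite points, not only over $p$; your comparison with the fixed canonical model does handle these, but your framing ``the variation is confined to $\pi^{-1}(p)$'' is too narrow.

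The genuine gap is the case $a=0$. You assert that a ``good relative minimal model'' makes $K_{\mathcal{X}^{\mathrm{m}}/B}$ a pullback from $B$, so that $S$ is a fixed constant. This is an unproven relative-abundance-type assertion: existence of \emph{good} minimal models (with $K$ relatively semiample) is open in general for fibres with $K\equiv 0$, and even granting a minimal model, $K_{\mathcal{X}^{\mathrm{m}}/B}$ being $\pi$-nef and $\mathbb{Q}$-trivial on the generic fibre only yields, via the canonical bundle formula, a decomposition of the shape $\pi^{*}(\text{fixed }\mathbb{Q}\text{-divisor class})+(\text{effective vertical divisor})$ --- the multiple-fibre terms in Kodaira's formula already show the pullback claim must be relaxed, and in higher dimensions this decomposition is exactly the content of the semipositivity theory the paper invokes. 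Nor can you substitute a soft numerical argument: a Zariski-lemma cut gives proportionality of each fibre part to the whole fibre only with respect to an \emph{ample} cutting class, which does not transfer to the merely relatively nef $\mathcal{M}^{n-2}$, so the value $(\mathcal{M}^{n-1}.h^{*}K_{\mathcal{X}^{\mathrm{m}}/B})$ is not pinned down. With the canonical bundle formula / semipositivity input your scheme survives in weakened form --- the pullback part contributes a fixed constant (the discriminant and moduli data are birational invariants of $\pi^{o}$, hence independent of the completion and of the choice of minimal model), and the effective vertical part pairs nonnegatively with $\mathcal{M}^{n-1}$, so $S$ is bounded below rather than constant, which is all you need. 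But as written, for $a=0$ you have not eliminated the paper's semipositivity input; you have silently assumed something stronger than it.
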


\begin{proof}
We give a case by case proof depending on the 
Fujita-Kawamata type semipositivity (cf., e.g, \cite{Kol},\cite{Fuj}). 

If $a=0$, this is a Calabi-Yau fibration. From the semipositivity theorem, 
we can take $K_{\mathcal{X}/B}$ as an effective vertical divisors. 
Therefore, the 
corresponding Donaldson-Futaki invariant 
$(\mathcal{L}^{n-1}.K_{\mathcal{X}/B})$ multiplied by some positive constant, 
is non-negative. 

If $a>0$, here we only prove the case when 
$K_{\mathcal{X}/B}$ is relatively ample and $\mathcal{X}$ is canonical, 
i.e. the relative canonical model over $B$ and leave the rest to 
Theorem \ref{DF.decrease}. In that case, 
the corresponding Donaldson-Futaki invariant is simply 
$(K_{\mathcal{X}/B}^{n})$ up to positive multiple constant. 
Then it is the leading coefficient of 
$${\it deg}_{B}({\it det}(\pi_{*}\mathcal{O}_{\mathcal{X}}
(mK_{\mathcal{X}/B}))),$$
thanks to the Grothendieck-Riemann-Roch theorem. 
The semipositivity theorem \cite{Kol},\cite{Fuj} implies 
that the above quantity is all non-negative for sufficiently divisible 
$m\in \mathbb{Z}_{>0}$, so the assertion holds for the relative canonical model 
case. For general case, from 
Theorem \ref{DF.decrease} shows that birational modifications of 
this relative canonical model has bigger or equal Donaldson-Futaki invariants 
so that the assertion holds. 
\end{proof}

Motivated by the above, next section \ref{Fut.small}, 
and the relation of CM line bundle with the Weil-Petersson metrics, 
it is natural to conjecture that 

\begin{Conj}
If ${\it dim}(B)=1$, 
the generic K-semistability of $(\mathcal{X},\mathcal{L})$ is 
equivalent to that generic fiber $(F,\mathcal{L}|_{F})$ 
of $\pi$ is K-semistable in the original sense \cite{Don}. 
\end{Conj}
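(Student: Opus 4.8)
The plan is to prove both directions of the equivalence, and the main subtlety is that generic K-semistability is defined via \emph{lower boundedness} of the DF invariants over \emph{all} birational completions, whereas ordinary K-semistability is phrased in terms of test configurations of the generic fiber. First I would set up the dictionary between these two notions. Since $\dim(B)=1$, a completion $\pi\colon(\mathcal{X},\mathcal{L})\to B$ is obtained by filling in the fiber over $p$, and after base-changing and taking a suitable normalization this should be recognizable as (a family of) test configurations of $(F,\mathcal{L}|_F)$ attached to the point $p$. By the Proposition of Wan--Odaka quoted above, on a curve the relevant DF invariant equals $\deg(\lambda_{CM}(\mathcal{X},\mathcal{L}))$, which by Paul--Tian is exactly the numerical realization of the Donaldson--Futaki invariant of the induced test configuration. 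So the first step is to make this identification precise: each completion over $p$ yields a test configuration of the generic fiber, and conversely each test configuration of $(F,\mathcal{L}|_F)$ can be spread out and glued over $B\setminus\{p\}$ to produce a completion.

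Granting that dictionary, the easier implication is that original K-semistability of $(F,\mathcal{L}|_F)$ implies generic K-semistability. If every test configuration of the generic fiber has DF $\ge 0$, then by the identification each completion has DF (equivalently $\deg\lambda_{CM}$) nonnegative, hence the set of DF invariants is bounded below (by $0$). Here I would use homogeneity, Proposition \ref{Mabuchi.basic}(ii), together with the pullback-invariance Proposition \ref{DF.pullback}, to reduce to a normalized family of completions so that the nonnegativity statement for test configurations transfers directly. The converse implication is where the definitions genuinely diverge: generic K-semistability only asks for \emph{some} lower bound, not nonnegativity, so one must argue that a lower bound over all birational completions already forces nonnegativity of the DF invariant of every test configuration of the fiber. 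The standard device here is the rescaling/degeneration argument: given a test configuration $\mathcal{T}$ of $(F,\mathcal{L}|_F)$, one produces an infinite family of completions — for instance by iterating $\mathcal{T}$ or by composing with a sequence of Veronese/base-change operations — along which the DF invariant scales linearly with the parameter; a strictly negative DF would then drive the family's invariants to $-\infty$, contradicting lower boundedness.

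The hard part will be controlling this converse direction rigorously, because producing completions over the one-dimensional base $B$ whose associated test configurations are prescribed (or whose DF invariants scale in a controlled way) requires a gluing/spreading-out construction that is not entirely formal: one must ensure $\mathcal{L}$ remains $\pi$-nef, that $\mathcal{X}$ stays normal and $\mathbb{Q}$-Gorenstein, and that the completion over $B\setminus\{p\}$ matches the fixed $(\mathcal{X}^o,\mathcal{L}^o)$. The concavity Proposition \ref{Mabuchi.basic}(iii) of $\mathcal{V}$ along fiber divisors is likely the technical engine: it lets one interpret the DF invariant as a derivative of a concave functional, so that lower boundedness of $\mathcal{V}$ (or its $K_{\mathcal{X}/B}$-derivative) translates into a sign condition. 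I would therefore expect the crux of the argument to be a compactness-type statement: the normalized DF invariants of completions, viewed through the non-archimedean/Berkovich picture of \cite{BFJ}, \cite{BFJ2} indicated in the remarks following Proposition \ref{Mabuchi.basic}, form precisely the values of the Donaldson--Futaki functional on smooth semipositive metrics, and lower boundedness there is equivalent to nonnegativity on the dense set of test configurations.

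Finally I would address the matching of normalizations and the role of the denominator in $\mathcal{V}$. Because our DF invariant carries the extra factor $(\mathcal{L}^o|_F^{\dim F})^{2(n-1)/\dim F}$ relative to the classical one, I would verify that for $\dim(B)=1$ this factor is a fixed positive constant (depending only on $\mathcal{L}^o$), so that the sign of our generalized DF agrees with the sign of the classical Donaldson--Futaki invariant of the fiber's test configuration; this is exactly what the Proposition of Wan--Odaka and the definition of $\lambda_{CM}$ guarantee. With that constant dividing out, ``bounded below'' for our invariants is the same condition as ``bounded below'' (equivalently nonnegative, after the scaling argument) for the classical invariants, completing the equivalence. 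The one genuine gap I anticipate — and the reason this is stated as a conjecture rather than a theorem — is that the spreading-out step may fail to realize \emph{every} test configuration of the generic fiber as a global completion over $B$ without enlarging the field or passing to a finite cover, so the cleanest formulation of the converse may require either a valuative reformulation or an additional properness hypothesis on the family of completions.
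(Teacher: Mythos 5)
You should first note that the paper offers \emph{no proof} of this statement: it is posed as a Conjecture (attributed in spirit to a question of X.~Wang), so there is no argument in the paper to compare yours against, and your proposal should be judged as an attempt on an open problem. Judged that way, the central step fails. Your ``dictionary'' between completions and test configurations is false as stated: a completion $\pi\colon(\mathcal{X},\mathcal{L})\to B$ of the fixed $\pi^{o}$ carries no $\mathbb{G}_m$-action and its special fiber degenerates a member of a generally non-isotrivial family, so it is not (even after base change and normalization) a test configuration of $(F,\mathcal{L}|_F)$; conversely, a test configuration of $F$ lives over $F\times\mathbb{A}^1$ and cannot be ``glued over $B\setminus\{p\}$'' into $\pi^{o}$, because the definition of generic K-semistability freezes the family away from $p$ and permits modifications only along $\pi^{-1}(p)$. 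The honest bridge, which the paper's remarks after Proposition \ref{Mabuchi.basic} gesture at, is the space of non-archimedean metrics attached to models over the local ring at $p$; but relating lower boundedness of ${\it DF}$ on that space to K-semistability of the generic fiber (a notion over the trivially valued field, via \cite{Don}) is precisely the content of the conjecture, not an identification you may grant at the outset.

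Two further steps are miscalibrated. The implication you call ``easier'' --- K-semistability of $F$ implies a lower bound on $\deg\lambda_{CM}$ over all completions --- is in fact the deep one: it amounts to (a case of) semipositivity of the CM line bundle for families with K-semistable fibers, a well-known open problem at the time of writing (cf.\ \cite{FR}, \cite{PT}), and it does not transfer formally from ${\it DF}\ge 0$ on test configurations since completions are not test configurations. In the converse direction, your rescaling device is unavailable: base change multiplies the CM degree by the covering degree, but base changes are \emph{excluded} from the definition of generic K-semistability (the paper admits them only later, for the normalised invariant ${\it nDF}$), and ``iterating $\mathcal{T}$'' presupposes a $\mathbb{G}_m$-equivariant structure that completions over $B$ do not have; so a single negative invariant does not obviously propagate to an unbounded-below family within the allowed class of modifications. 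You flag the spreading-out issue candidly in your last paragraph, but the obstruction is more basic than failing to realize \emph{every} test configuration: neither implication reduces to the classical notion by the identifications your plan assumes, which is why the statement remains a conjecture rather than a theorem in the paper.
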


A similar inspiring conjecture was asked before by X.Wang during my visit to Hong Kong 
in spring of 2010, and would like to express thanks to the hospitality of him and 
N-C.Leung. 


\section{To minimise the DF by the MMP}\label{Fut.small}

\subsection{Decrease of the DF along MMP}

Recall that \cite{Od0} observes, very vaguely speaking, that birationally \textit{``small'' models} 
in the MMP theory have ``small'' Donaldson-Futaki 
invariants. A while after \cite{Od0}, 
the \textit{continuous decrease} 
of the Donaldson-Futaki invariants along the MMP was first 
proved in \cite{LX} for families of Fano varieties over curves. 
As a K\"ahler analogue, this should corresponds to decrease of K-energy along 
the normalised K\"ahler-Ricci flow. 

We generalise the phenomenon to our much extended framework as follows. 

\begin{Thm}\label{Fut.decrease}
Suppose $K_{F}=a\mathcal{L}|_{F}$ and let us consider 
the $K_{\mathcal{X}/B}$-MMP with ``scaling $\mathcal{L}$'' 
(precisely speaking, the scaling divisor is $\frac{1}{l}\mathcal{D}$ 
where $\mathcal{D}$ corresponds to a general section of 
$(\mathcal{L}\otimes \pi^{*}\mathcal{M})^{\otimes l}$ 
with sufficiently ample $M$ on $B$ and sufficiently big $l\in \mathbb{Z}_{>0}$. 

Along that MMP with scaling, which we see as the linear and continuous change 
of polarisations $\mathcal{L}_{t}$ to the relative canonical divisor, 
the Donaldson-Futaki invariants ${\it DF}(\mathcal{L}_{t})_{t\ge 0}$ 
monotonely decrease when $t$ increases. 
\end{Thm}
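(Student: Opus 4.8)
The plan is to show that the Donaldson-Futaki invariant, expressed as a derivative of the normalised volume functional $\mathcal{V}$ along the direction of $K_{\mathcal{X}/B}$, decreases because the MMP with scaling moves the polarisation $\mathcal{L}_t$ along a specific ray towards $K_{\mathcal{X}/B}$, and the concavity of $\mathcal{V}$ established in Proposition \ref{Mabuchi.basic}(iii) controls the sign of the second derivative. First I would set up the MMP with scaling precisely: the scaling construction produces a sequence of critical values $0=t_0<t_1<\cdots$ and on each interval the relevant divisor $\mathcal{L}_t=\mathcal{L}+t\,K_{\mathcal{X}/B}$ (suitably normalised so the curve passes through the canonical direction) stays $\pi$-nef, being a convex combination of $\mathcal{L}$ and the output of the previous step. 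Because birational modifications occur only along $\pi^{-1}(p)$, the divisor $K_{\mathcal{X}/B}$ differs from its proper transforms by divisors supported on fibres over finite points of $B$, which is exactly the hypothesis needed to invoke the concavity in part (iii).

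The core computation is to identify $\frac{d}{dt}\,{\it DF}(\mathcal{L}_t)$ with a second-order quantity in $\mathcal{V}$. Using the Proposition of Wang-Odaka, ${\it DF}(\mathcal{L}_t)$ is (up to the positive normalising constant coming from $(\mathcal{L}^{o}|_F^{\dim F})$) equal to $\frac{\partial}{\partial K_{\mathcal{X}/B}}\mathcal{V}(\mathcal{L}_t)$, so that
$$
\frac{d}{dt}\,{\it DF}(\mathcal{L}_t)=\frac{\partial^2}{\partial K_{\mathcal{X}/B}^2}\,\mathcal{V}(\mathcal{L}_t)\cdot(\text{positive constant}).
$$
Since $K_{\mathcal{X}/B}$ is, on each MMP step, represented by a Cartier divisor supported on $\pi^{-1}(p)$ relative to a fixed reference model, Proposition \ref{Mabuchi.basic}(iii) gives $\frac{\partial^2}{\partial K_{\mathcal{X}/B}^2}\mathcal{V}\le 0$ via the Hodge index theorem, so the $t$-derivative of the DF invariant is non-positive on each open interval. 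Here the homogeneity and pullback invariance from parts (i)--(ii) are used to make sense of $\mathcal{V}(\mathcal{L}_t)$ as $t$ varies continuously across the infinite-dimensional nef cone and to guarantee that passing to a common model (where all the $\mathcal{L}_t$ live as pullbacks) does not change either $\mathcal{V}$ or ${\it DF}$.

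The remaining point is continuity across the critical values $t_i$ where a flip, divisorial contraction, or the transition to the next model occurs. I would argue that ${\it DF}(\mathcal{L}_t)$ is continuous at each $t_i$ by Proposition \ref{DF.pullback}: the two models meeting at $t_i$ are related by a birational morphism (or a common resolution), and $\mathcal{L}_{t_i}$ is the pullback of the same $\pi$-nef class, so the DF invariants computed on either side agree. Combining piecewise monotonicity on the open intervals with continuity at the junctions yields global monotone decrease of $t\mapsto{\it DF}(\mathcal{L}_t)$.

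The hard part will be the bookkeeping at the contraction and flip steps: I must verify that after each step the new relative canonical divisor, pulled back to a fixed common model, still differs from the reference $K_{\mathcal{X}/B}$ only by a divisor supported over $p$, so that the concavity input applies uniformly, and that the normalising factor $(\mathcal{L}^{o}|_F^{\dim F})$ genuinely stays constant along the MMP (it depends only on $\mathcal{L}^{o}$ over $B\setminus\{p\}$, which is untouched). I expect the condition $K_F=a\mathcal{L}|_F$ to enter exactly here, ensuring that the generic fibre direction and the canonical direction are proportional so that the scaling ray is well-defined and the denominator in $\mathcal{V}$ transforms predictably under the rescaling of $\mathcal{L}_t$; controlling the interaction of this rescaling with the homogeneity in part (ii) is the most delicate technical step.
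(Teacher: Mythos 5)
Your core identification is off in a way that breaks the key step. The MMP with scaling of $\mathcal{L}$ moves the polarisation along $\mathcal{L}_{t}=\mathcal{L}+tE$ with $E:=K_{\mathcal{X}/B}-a\mathcal{L}$, not along $\mathcal{L}+tK_{\mathcal{X}/B}$; since ${\it DF}$ is homogeneous of degree $2n-2$ in $\mathcal{L}$ (not degree $0$), these two rays, which agree only up to the $t$-dependent rescaling $\mathcal{L}+tE=(1-ta)\bigl(\mathcal{L}+\tfrac{t}{1-ta}K_{\mathcal{X}/B}\bigr)$, do not have equivalent monotonicity statements, so your normalisation already changes what is being proven. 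More seriously, your sign argument invokes Proposition \ref{Mabuchi.basic}(iii) for $\frac{\partial^{2}}{\partial K_{\mathcal{X}/B}^{2}}\mathcal{V}$ on the grounds that $K_{\mathcal{X}/B}$ is ``represented by a Cartier divisor supported on $\pi^{-1}(p)$ relative to a fixed reference model''. That is false: $K_{\mathcal{X}/B}$ restricts to $K_{F}\neq 0$ on the generic fibre (unless $a=0$), so it is not a vertical divisor; only \emph{differences} of (pullbacks of) relative canonical divisors between two models are exceptional/vertical. Concretely, $(\mathcal{L}^{n-2}.K_{\mathcal{X}/B}^{2})$ has no sign in general (take $K_{\mathcal{X}/B}$ relatively ample), so the inequality $\frac{\partial^{2}}{\partial K_{\mathcal{X}/B}^{2}}\mathcal{V}\le 0$ is simply not available from the Hodge index theorem, and Proposition \ref{Mabuchi.basic}(iii) does not apply to this direction.

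The repair, which is the paper's actual argument, is that the hypothesis $K_{F}=a\mathcal{L}|_{F}$ enters not (as you guessed) through the normalising denominator but by making the scaling direction \emph{vertical}: $E=K_{\mathcal{X}/B}-a\mathcal{L}$ is numerically trivial on $F$, hence supported on fibres over finitely many points, which is exactly the hypothesis of Proposition \ref{Mabuchi.basic}(iii). Along $\mathcal{L}_{t}=\mathcal{L}+tE$ the restrictions to $F$ are constant, so $(\mathcal{L}_{t}|_{F}^{\dim F})$ and $(\mathcal{L}_{t}|_{F}^{\dim F-1}.K_{F})=a(\mathcal{L}|_{F}^{\dim F})$ do not move, and differentiating the explicit intersection-number formula for ${\it DF}$ collapses, using $K_{\mathcal{X}/B}-a\mathcal{L}_{t}=(1-ta)E$, to
$$\frac{d}{dt}{\it DF}(\mathcal{L}_{t})
= n(n-1)(1-ta)\,(\mathcal{L}|_{F}^{\dim F})^{\frac{n-1}{\dim F}}\,(\mathcal{L}_{t}^{n-2}.E^{2})\;\le\;0,$$
i.e.\ precisely $\frac{\partial^{2}}{\partial E^{2}}\mathcal{V}$ up to a positive factor, negative by the Hodge index theorem. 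Your continuity bookkeeping across flips and contractions via Proposition \ref{DF.pullback} (computing on a common model, where ${\it DF}$ is pullback-invariant) is sound and consistent with the paper's implicit treatment, but without replacing the differentiation direction $K_{\mathcal{X}/B}$ by $E$, the central inequality of your proposal does not go through.
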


\begin{proof}
We have $\mathcal{L}_{t}=tE+\mathcal{L}$ 
where 
$E:=K_{\mathcal{X}/B}-a\mathcal{L}$. 
Thus what we need to show is for the direction to $E$, 
the Futaki decreases i.e., 
$
\frac{\partial}{\partial E}({\it DF}(\mathcal{L}))<0. 
$
It follows from the properties of Mabuchi functional $\mathcal{V}$ 
(\ref{Mabuchi.basic}). 
Indeed, after some simple calculation using (\ref{Mabuchi.basic}(ii)), 
this derivative can be rewritten via $\mathcal{V}$ as 
$\frac{\partial^{2}}{\partial E^{2}}\mathcal{V}$ which is 
negative by the convexity of $\mathcal{V}$ (\ref{Mabuchi.basic}(iii)). 

We would like to leave the check of the simple calculation to the readers. 
\end{proof}

Note it is proved that K\"{a}hler-Ricci flow 
is 
compatible with (K-)MMP with ample scaling (\cite{CL},\cite{ST} etc). 
This compatibility can be also seen when we transfer the K\"ahler-Ricci flow 
into non-archimedian setting. It would be interesting to see the above 
phenomenon from differential geometric Ricci flow point of view. 

\subsection{Minimisation -semistable case-}

In our algebraic setting, the observations below show that 
\textit{minimisation (critical points) of our 
generalised Donaldson-Futaki invariants} give a 
``canonical limits'' of ``semistable'' objects, 
which we see as an analogue of the fact that 
critical points of the K-energy \cite{Mab} are those with 
constant scalar curvature (e.g. K\"ahler-Einstein metrics). 

These phenomenons are motivated by \cite{Od0} and \cite{LX}. 
Indeed the method of \cite{Od0} to get small (in that case, negative) 
Futaki invariants, taking some ``canonical model'' in the MMP theory, 
can be interpretted as a phenomenon that 
``\textit{canonical} limits (made by the MMP) give \textit{small} Futaki invariants''. 
Furthermore, \cite{LX} later proved the decrease of Futaki invariant of Fano 
varieties case directly, which we have just generalised as 
the previous Proposition \ref{Fut.decrease}.

\begin{Prop}[${\it dim}(B)>1$ case]\label{DF.decrease}

If $(\mathcal{X},\mathcal{L})$ minimises the Donaldson-Futaki invariants 
among other (birational) completions of $(\mathcal{X}^{o},\mathcal{L}^{o})
\twoheadrightarrow B^{o}$ satiefies following basic properties. 

\begin{enumerate}

\item $\mathcal{X}$ has only canonical singularities (we assume $\mathcal{X}$ is 
$\mathbb{Q}$-Gorenstein) if $(\mathcal{L}^{{\it dim}(F)}.K_{F})\le 0$ (e.g. 
if $-K_{F}$ is nef). 

\item If $K_{F}=a\mathcal{L}|_{F}$, then for a open neighborhood $U$ 
of $p$ in $B$, $K_{\mathcal{X}/B}|_{\pi^{-1}(U)}=
a\mathcal{L}|_{\pi^{-1}(U)}$. 

\end{enumerate}

\end{Prop}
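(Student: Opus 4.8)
The plan is to prove both statements by a perturbation-and-contradiction argument: if $(\mathcal{X},\mathcal{L})$ minimises ${\it DF}$, then no birational modification over $p$ together with a vertical change of polarisation may strictly lower ${\it DF}$. The computational heart, in both parts, is that the first derivative of ${\it DF}$ along a Cartier divisor supported on $\pi^{-1}(p)$ reduces to the pairing $(\mathcal{L}^{n-2}.\,\cdot\,.\,\cdot\,)$ on such divisors, which is negative semidefinite by Proposition \ref{Mabuchi.basic}$(iii)$ (Hodge index together with $(\mathcal{L}^{n-2}.D.\pi^{*}H)=0$), and in fact negative definite once $\dim(B)>1$. I use throughout that by Proposition \ref{DF.pullback} and \ref{Mabuchi.basic}$(i)$, replacing $\mathcal{X}$ by a modification $f\colon\mathcal{Y}\to\mathcal{X}$ with polarisation $f^{*}\mathcal{L}$ leaves both $\mathcal{V}$ and ${\it DF}$ unchanged, so any admissible perturbation of $f^{*}\mathcal{L}$ is a genuine competitor.

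For $(ii)$, set $E:=K_{\mathcal{X}/B}-a\mathcal{L}$; since $K_{F}=a\mathcal{L}|_{F}$, this $E$ is vertical, and only its part supported on $\pi^{-1}(p)$ can vary among completions (the remaining vertical part has support disjoint from $\pi^{-1}(p)$, so contributes nothing to the relevant triple products). Consider the line $\mathcal{L}_{t}=\mathcal{L}+tE$. Using the homogeneity \ref{Mabuchi.basic}$(ii)$ exactly as in the proof of Theorem \ref{Fut.decrease}, the derivative of ${\it DF}(\mathcal{L}_{t})$ at $t=0$ is a positive multiple of $\frac{\partial^{2}}{\partial E^{2}}\mathcal{V}=n(n-1)(\mathcal{L}^{n-2}.E^{2})$ divided by the fixed positive normalising factor, hence is $\le 0$ by \ref{Mabuchi.basic}$(iii)$. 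Since $\mathcal{L}_{t}$ is an admissible completion for small $t>0$, minimality forces this derivative to vanish, i.e. $(\mathcal{L}^{n-2}.E^{2})=0$. Because $\dim(B)>1$, the point $p$ has codimension $\ge 2$ and there is no nonzero divisor pulled back from the base and supported over $p$; the null space of the (semidefinite) pairing is therefore trivial, so $E\equiv 0$, and the negativity lemma upgrades this to $E=0$ on $\pi^{-1}(U)$, which is the asserted identity $K_{\mathcal{X}/B}|_{\pi^{-1}(U)}=a\mathcal{L}|_{\pi^{-1}(U)}$.

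For $(i)$, suppose instead that $\mathcal{X}$ is not canonical over $p$. Take a $\mathbb{Q}$-factorial canonical modification $f\colon\mathcal{Y}\to\mathcal{X}$, an isomorphism over $B\setminus\{p\}$, with $G:=K_{\mathcal{Y}/B}-f^{*}K_{\mathcal{X}/B}=\sum_{i}a_{i}E_{i}\ne 0$ and $G$ being $f$-ample (so $\mathcal{M}:=f^{*}\mathcal{L}+\epsilon G$ is $\pi$-ample for small $\epsilon>0$, an admissible competitor). Differentiating ${\it DF}(\mathcal{Y},\mathcal{M})$ at $\epsilon=0$ via the explicit intersection formula, the fibre-degree factors $(\mathcal{M}|_{F}^{\dim(F)-1}.K_{F})$, $(\mathcal{M}|_{F}^{\dim(F)})$ and the term $(\mathcal{M}^{n})$ are all stationary, because $G$ is $f$-exceptional and vertical, so $G|_{F}=0$ and $f_{*}G=0$ (projection formula). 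The only surviving contribution is a positive multiple of $(f^{*}\mathcal{L}^{n-2}.G^{2})=\sum_{i,j}a_{i}a_{j}(f^{*}\mathcal{L}^{n-2}.E_{i}.E_{j})<0$ by negative definiteness. Thus ${\it DF}$ strictly decreases, contradicting minimality; hence $\mathcal{X}$ is canonical. The hypothesis that $K_{F}$ has non-positive $\mathcal{L}$-degree (e.g.\ $-K_{F}$ nef) is used not in this infinitesimal step but to guarantee that ${\it DF}$ is bounded below on the space of completions, so that ``minimises'' is meaningful; for fibres of general type one instead identifies the minimiser with the relative canonical model through Theorem \ref{Fut.decrease}.

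The step I expect to be the main obstacle is the negative definiteness of $(\mathcal{L}^{n-2}.E_{i}.E_{j})$ when $\mathcal{L}$ is only $\pi$-nef rather than ample. One must replace $\mathcal{L}$ by $\mathcal{L}+\pi^{*}H$ with $H$ ample on $B$, check that the cross terms involving $\pi^{*}H$ vanish against cycles lying over the single point $p$ (precisely the observation $(\mathcal{L}^{n-2}.D.\pi^{*}H)=0$ of \ref{Mabuchi.basic}$(iii)$), and then cut down by general members to a surface on which $f$-exceptional, resp.\ contracted, curves have negative definite intersection form. The accompanying delicate points are the passage from numerical triviality to genuine vanishing of $E$ via the negativity lemma, the one-sided nature of the minimality inequality when the minimiser lies on the boundary of the relative nef cone, and the verification that every perturbation remains an admissible completion, i.e.\ is $\pi$-nef and unchanged over $B\setminus\{p\}$.
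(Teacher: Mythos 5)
Your proposal is correct and, in outline, is the paper's own argument: for $(i)$ the paper likewise passes to the relative canonical model $f\colon \mathcal{X}^{\it can}\to\mathcal{X}$, perturbs $f^{*}\mathcal{L}$ linearly in the (anti-effective, $f$-exceptional) discrepancy divisor, and derives a contradiction from the sign of $\frac{d}{dt}|_{t=0}{\it DF}$; for $(ii)$ the paper simply cites Theorem \ref{Fut.decrease}, which is exactly the computation you carry out by hand (the derivative of ${\it DF}$ in the direction $E=K_{\mathcal{X}/B}-a\mathcal{L}$ equals a positive multiple of $\frac{\partial^{2}}{\partial E^{2}}\mathcal{V}=n(n-1)(\mathcal{L}^{n-2}.E^{2})$). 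The one genuine divergence is in $(i)$: the paper keeps both terms of the derivative, $-(f^{*}\mathcal{L}^{n-2}.E.(f^{*}K_{\mathcal{X}/B}-E))(\cdots)$ and $(f^{*}\mathcal{L}^{n-1}.E)(\cdots)(\mathcal{L}|_{F}^{\dim(F)-1}.K_{F})$, and invokes the hypothesis $(\mathcal{L}^{\dim(F)}.K_{F})\le 0$ to control the sign of the second, whereas you observe that since your $G$ is $f$-exceptional the projection formula kills both $(f^{*}\mathcal{L}^{n-1}.G)$ and the cross term with $f^{*}K_{\mathcal{X}/B}$, so only the Hodge-index term survives and the hypothesis plays no infinitesimal role. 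Your bookkeeping is in fact the sharper one --- the paper's second term is itself identically zero by the very projection formula it uses elsewhere, so its appeal to the hypothesis there is redundant --- though your relocation of that hypothesis to ``lower-boundedness of ${\it DF}$'' is your own gloss, not something the paper asserts or proves. On the delicate points you flag, you and the paper stand on equal footing: the paper asserts strict negativity of $(f^{*}\mathcal{L}^{n-2}.E^{2})$ with a bare reference to the Hodge index theorem, and, as you correctly note, for merely $\pi$-nef $\mathcal{L}$ (which the setting of Section \ref{Gen.set} allows, e.g. $\mathcal{L}=\pi^{*}(\mathrm{ample})$) the form on divisors over $p$ can degenerate, so your blanket claim of negative definiteness whenever $\dim(B)>1$ really does require the relative-ampleness reduction via $\mathcal{L}+\pi^{*}H$ and a surface cut-down that you sketch; likewise your passage in $(ii)$ from $(\mathcal{L}^{n-2}.E^{2})=0$ to $E=0$ near $\pi^{-1}(p)$ is precisely the strictness that the paper silently absorbs into the statement of Theorem \ref{Fut.decrease}.
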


\begin{proof}

(i): Suppose the contrary and take the relative canonical model of $\mathcal{X}$ 
as $f\colon \mathcal{X}^{\it can}\to \mathcal{X}$ by \cite{BCHM} again. 
Putting $E:=-K_{\mathcal{X}/B}$, we know that $E$ is effective by the negativity lemma 
again \cite[3.39]{KM}. Then, we have 
$$\frac{1}{n(n-1)} \frac{d}{dt}|_{t=0}{\it DF}(\mathcal{X}^{\it can},f^*\mathcal{L}-tE)$$ 

$$=-(f^{*}\mathcal{L}^{n-2}.E.f^{*}K_{\mathcal{X}/B}-E)(\mathcal{L}|_{F}^{{\it dim}(F)})^{\frac{n-1}{{\it dim}(F)}}$$

$$
+(f^{*}\mathcal{L}^{n-1}.E)(\mathcal{L}|_{F}^{{\it dim}(F)})^{\frac{{\it dim}(B)-1}{{\it dim}(F)}}
(\mathcal{L}|_{F}^{{\it dim}(F)-1}.K_{F}). 
$$

The first term is negative from the Hodge index theorem and the second is also 
negative due to our assumption. Thus we get a contradiction. 

(ii): It simply follows from Theorem \ref{Fut.decrease}. 
\end{proof}


The following, about the case over curve, are basically similar to the 
author's older works. The essential 
difference with the above higher dimensional base case is that we allow base change 
and consider ``normalised Futaki'' invariants concerning the degree of base change. 

Our modified setting is as follows. 
We fix $(\mathcal{X}^{o},\mathcal{L}^{o})\twoheadrightarrow B^{o}$ 
($=B\setminus\{p\}$ where $B$ is a smooth curve) 
as before, and consider all finite morphisms $\phi\colon \tilde{B}\to B$ and 
the normal $\mathbb{Q}$-Gorenstein completions of $(\mathcal{X}^{o},\mathcal{L}^{o})\times _{B}\tilde{B} \to 
\tilde{B}\setminus \{f^{-1}(p)\}$ to whole $\tilde{B}$ which we write 
$(\tilde{\mathcal{X}},\tilde{\mathcal{L}})$ and assume $\mathcal{X}$ is 
normal and $\mathbb{Q}$-Gorenstein. For this model, 
we set 
$$
{\it nDF}(\tilde{\mathcal{X}},\tilde{\mathcal{L}}):=\frac{{\it DF}(\tilde{\mathcal{X}},\tilde{\mathcal{L}})}
{{\it deg}(\phi)}, 
$$
as in \cite{LX}. 
We call $(\tilde{\mathcal{X}},\tilde{\mathcal{L}})$ is \textit{nDF-minimising} 
if ${\it nDF}(\tilde{\mathcal{X}},\tilde{\mathcal{L}})\le {\it nDF}(\tilde{\mathcal{X}}',\tilde{\mathcal{L}}')$ for all other possible $(\tilde{\mathcal{X}}',\tilde{\mathcal{L}}')$ 
(we allow all base changes $\phi$). 

\begin{Thm}[${\it dim}(B)=1$ case]
In the above setting, suppose $(\mathcal{X},\mathcal{L})$ is nDF-minimising. 
Then it holds that 
\begin{enumerate}
\item all the fibres are reduced and semi-log-canonical. 
\item if $F$ is a klt $\mathbb{Q}$-Fano variety, then all fibers are 
$\mathbb{Q}$-Fano varieties. 
\item if $K_{F}=a\mathcal{L}|_{F}$ with $a\ge 0$, the normalised DF invariant of 
$(\mathcal{X}',\mathcal{L}')\to B'$ is minimum all the models if and only if 
any fibre $G$ is  reduced slc with $K_{G}=a\mathcal{L}|_{G}$. 
\end{enumerate}
\end{Thm}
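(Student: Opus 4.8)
\emph{Strategy and reduction.} The plan is to prove each assertion by contraposition: for every fibre violating the stated property I would exhibit an admissible operation that strictly lowers $nDF$, contradicting minimality. Two operations are available: (a) birational modifications of $\mathcal{X}$ over the fixed curve $B$ (carrying the pulled-back polarisation, cf.\ \ref{DF.pullback}), and (b) finite base changes $\phi\colon\tilde B\to B$ ramified over $p$ followed by normalisation. Everything is organised around the $\dim B=1$ specialisation of the explicit formula, in which the last exponent vanishes:
\[
DF(\mathcal{X},\mathcal{L}) = n(\mathcal{L}^{n-1}.K_{\mathcal{X}/B})\,(\mathcal{L}|_{F}^{\dim F})^{\frac{n-1}{\dim F}} - (n-1)(\mathcal{L}^{n})(\mathcal{L}|_{F}^{\dim F -1}.K_{F}).
\]
The two factors $(\mathcal{L}|_{F}^{\dim F})$ and $(\mathcal{L}|_{F}^{\dim F -1}.K_{F})$ are intrinsic to the generic fibre, hence untouched by (a) and (b); in the contraposition steps $(\mathcal{L}^{n})$ is moreover invariant under (a) by the projection formula and scales by $\deg\phi$ under (b). Thus in these comparisons the whole second summand of $nDF$ is rigid, and the sign of the change of $nDF$ is dictated by the relative--canonical term $(\mathcal{L}^{n-1}.K_{\mathcal{X}/B})$ alone.

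\emph{Reducedness.} Suppose a component of the central fibre occurs with multiplicity $>1$. I would take a base change $\phi$ totally ramified over $p$, of degree $e$ divisible by the fibre multiplicities, let $\nu\colon\tilde{\mathcal{X}}\to\mathcal{X}$ be the normalisation of $\mathcal{X}\times_B\tilde B$, and set $\tilde{\mathcal{L}}=\nu^{*}\mathcal{L}$. The standard conductor/different computation gives $K_{\tilde{\mathcal{X}}/\tilde B}=\nu^{*}K_{\mathcal{X}/B}-\mathfrak{d}$ with $\mathfrak{d}\ge 0$ supported on the central fibre, and $\mathfrak{d}\neq 0$ precisely because the fibre is non-reduced. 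As $\tilde{\mathcal{L}}$ is relatively ample, $(\tilde{\mathcal{L}}^{n-1}.\mathfrak{d})>0$, whence
\[
nDF(\tilde{\mathcal{X}},\tilde{\mathcal{L}}) = DF(\mathcal{X},\mathcal{L}) - \frac{n}{e}(\tilde{\mathcal{L}}^{n-1}.\mathfrak{d})\,(\mathcal{L}|_{F}^{\dim F})^{\frac{n-1}{\dim F}} < DF(\mathcal{X},\mathcal{L}),
\]
contradicting minimality. This is the family analogue of the Ross--Thomas normalisation inequality recorded after \ref{DF.pullback}.

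\emph{Semi-log-canonicity and the Fano case.} With reduced fibres in hand, semi-log-canonicity of the central fibre is equivalent, by inversion of adjunction, to log canonicity of the pair $(\mathcal{X},\mathcal{X}_p)$ near $\mathcal{X}_p$. If this fails I would replace $\mathcal{X}$ by its log canonical modification (existing by \cite{BCHM}) and apply the negativity lemma \cite[3.39]{KM} exactly as in \ref{DF.decrease}(i) to see that the relative--canonical term, hence $DF$, strictly drops; so a minimiser is lc and its fibres are slc. For (ii), when $F$ is klt $\mathbb{Q}$-Fano (so $-K_F$ is ample) I would run the $K_{\mathcal{X}/B}$-MMP with scaling of \ref{Fut.decrease}: $DF$ decreases monotonically along it and terminates at the relative anticanonical model, so a minimiser must already be that model, whose fibres are $\mathbb{Q}$-Fano. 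This is the generalisation of \cite{LX} alluded to in \ref{Fut.decrease}.

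\emph{The characterisation (iii) and the main obstacle.} When $K_F=a\mathcal{L}|_F$ the vertical class $E:=K_{\mathcal{X}/B}-a\mathcal{L}$ is numerically trivial on the generic fibre, so $\frac{\partial}{\partial E}DF=\frac{\partial^{2}}{\partial E^{2}}\mathcal{V}\le 0$ by the concavity \ref{Mabuchi.basic}(iii); the Hodge-index computation underlying that proposition is strict unless $E\equiv_\pi 0$. Hence a minimiser forces $E\equiv_\pi 0$, i.e.\ $K_{\mathcal{X}/B}\equiv_\pi a\mathcal{L}$, which together with (i) and adjunction gives $K_G=a\mathcal{L}|_G$ on every reduced slc fibre $G$; conversely, if every fibre already satisfies this then $E\equiv_\pi 0$, the MMP with scaling is stationary, and $(\mathcal{X},\mathcal{L})$ sits at the critical point of the concave $\mathcal{V}$, hence is a minimiser. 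The hard part is the slc direction combined with the need to beat \emph{all} base changes simultaneously: one must guarantee that the log canonical modification lowers the \emph{full} $DF$ (not merely its leading term) strictly, and that the different bookkeeping in (b) and the discrepancy decrease in (a) cannot conspire to leave a non-slc minimiser. Securing this uniform control, via inversion of adjunction and the strictness of the Hodge-index inequality, is the crux of the argument.
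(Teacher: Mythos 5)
Your reducedness step is essentially the paper's: base change ramified at $p$, normalisation, and the conductor/Ross--Thomas inequality recorded after Proposition \ref{DF.pullback} (cf.\ \cite[5.1, 5.2]{RT}); likewise the ``only if'' half of (iii), via strictness of the Hodge-index (Zariski-lemma) inequality unless $E=K_{\mathcal{X}/B}-a\mathcal{L}$ is $\pi$-trivial, is the paper's appeal to Theorem \ref{Fut.decrease}. But the remaining parts have genuine gaps, which you partly flag yourself. For the slc statement, you reduce by inversion of adjunction to log canonicity of $(\mathcal{X},\mathcal{X}_p)$ and invoke the log canonical modification ``by \cite{BCHM}'' --- but the existence of the relative lc model of a \emph{non-lc} pair is not a direct consequence of \cite{BCHM}; the paper cites \cite{OX} for exactly this. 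More seriously, you never establish the key inequality: that perturbing the polarisation to $f^{*}\mathcal{L}(-tE)$ with $E=K_{\mathcal{X}^{\it lc}}-f^{*}K_{\mathcal{X}}+{\it Exc}(f)$ strictly lowers the \emph{full} DF for $0<t\ll 1$. You defer this as ``the crux,'' whereas the paper completes it by the discrepancy computation of \cite[section 3]{Od0}. Note also that your framing ``the sign of the change of $nDF$ is dictated by the relative-canonical term alone'' fails precisely in these steps, since the polarisation itself moves, so $(\mathcal{L}^{n})$ is not rigid.

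Part (ii) as you argue it does not go through: Theorem \ref{Fut.decrease} presupposes $K_{F}=a\mathcal{L}|_{F}$, which is not assumed in (ii) (there $F$ is klt $\mathbb{Q}$-Fano but $\mathcal{L}|_{F}$ is an arbitrary polarisation), so the $K_{\mathcal{X}/B}$-MMP with scaling and its monotonicity are not available; and even granting proportionality, termination at a relatively anticanonical model does not by itself make the special fibre normal, reduced, or klt --- that is the content of the special test configuration theorem of \cite{LX}, which needs further base changes. The paper instead supposes $\mathcal{X}_{0}$ is not klt, passes to the relative log canonical model of $(\mathcal{X},(1-\epsilon)\mathcal{X}_{0})$ by \cite{BCHM}, and argues as in \cite[section 6]{Od0}. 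Finally, for the ``if'' direction of (iii), your ``critical point of the concave $\mathcal{V}$'' reasoning lives over a fixed base and, as you concede, cannot by itself beat all finite base changes $\phi$; the paper imports this direction as a nontrivial external result (\cite[last section]{Od2}, and \cite{WX} for $a>0$) rather than deriving it from concavity. So the proposal matches the paper on (i)-reducedness and the converse of (iii), but leaves the slc decrease, the Fano case, and the minimality of slc fillings unproved.
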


\begin{proof}

(i): If the fiber over $p\in B$ is not reduced, 
then we take base change of $B$ ramifying at $p$ with sufficiently divisible 
ramifying degree, and take its normalization. Then it is 
nontrivial along that preimage of non-reduced component so that the (normalized) 
DF decreases (cf., [remark after \ref{DF.pullback}], also \cite[5.1, 5.2]{RT}). 

So we can and do suppose $\mathcal{X}_{0}$ is reduced. 
The relative lc model of $(\mathcal{X},\mathcal{X}_{0})$ over $\mathcal{X}$ 
exists as \cite{OX} shows, which we denote as $f\colon \mathcal{X}^{{\it lc}}\to \mathcal{X}$. 

Then we consider a subtraction of a little of $E=K_{\mathcal{X}^{{\it lc}}}-f^{*}K_{\mathcal{X}}
+{\it Exc}(f)$ where ${\it Exc}(f)$ denotes the total reduced exceptional divisor i.e. 
$(\mathcal{X}^{{\it lc}},f^{*}\mathcal{L}(-tE))$ then the corresponding DF decrease if we change 
$t=0$ to $0<t\ll 1$, by the completely similar arguments as \cite[section 3]{Od0}. 

(ii): 
Supposing $\mathcal{X}_{0}$ is \textit{not} klt, 
then we take the relative log canonical model of $(\mathcal{X},(1-\epsilon)\mathcal{X}_{0})$ 
by \cite{BCHM}. Then the rest of the arguments is completely similar to \cite[section 6]{Od0}. 

(iii): ``If" direction is proved earlier in \cite[last section]{Od2} (also \cite{WX} for $a>0$ case). 
The converse holds as well, since if $\mathcal{L}-aK_{\mathcal{X}/B}$ is not $0$, 
then by Theorem \ref{Fut.decrease}, we have 
${\it DF}(\mathcal{L}+\epsilon E)<{\it DF}(\mathcal{L})$. 
\end{proof}

For Calabi-Yau case, a non-rigorous comment is that 
reduced slc CY fibers of $p\in B$ or its preimages by base changes 
form \textit{infinite ordered set} which we expect to  ``converges'' to 
tropical varieties which is homeomorphic to the dual complexes, as in  \cite{KS}. 
Regarding the Fano case (ii) we note that, by applying \cite{Kal}, 
it easily follows that all the completed $\mathbb{Q}$-Fano fibrations form 
a tree (as a graph) via Sarkisov links \cite{Kal}. We thank Annesophie Kaloghiros for 
answering the question regarding this. 

\subsubsection{Optimal destabilization}

Finally we propose a problem regarding ``maximal destablisation". 
We still keep the notation in section \ref{Gen.set}. 

\begin{Prob}
For fixed $\pi^{o}$, 
formulate the ``norms" $||\mathcal{L}||$ of polarisations $\mathcal{L}$ and 
show the existence of $(\mathcal{X},\mathcal{L})$ (``maximally destabilising'') which minimises the DF divided by the norm ${\it DF}(\mathcal{L})/||\mathcal{L}||$ 
among all birational models 
$\pi\colon(\mathcal{X},\mathcal{L})\twoheadrightarrow B$. 
\end{Prob}

We expect that relative canonical model $\mathcal{X}:=B^{{\it can}}\to B$ (cf. \cite{BCHM}, 
also recall the proof of (\ref{desing.ss})) will be the maximally destabilising model 
of non-(semi-)lc singularities in the case where $\mathcal{X}^{o}\simeq B^{o}$ 
(so that $\pi$ are birational). 
We also note that then the corresponding DF invariant is $(K_{\mathcal{X}/B})^{n}$ 
and its log version also exists. Indeed, take relative log canonical model 
of non-log-canonical base $B$ by \cite{OX} as $\mathcal{X}:=B^{{\it lc}}\to B$. 
Then the corresponding log DF invariant is $(K_{B^{{\it lc}}/B}+E)^{n}$ 
with reduced total exceptional divisor $E$ on $B^{{\it lc}}$, which coincides with 
the ``\textit{local volume}" of \cite{BdFF}, \cite{Ful} (for the proof of the 
coincidence, please see \cite{Zha}). 

\textbf{Acknowledgements}
I would like to take this opportunity to show my sincere gratitudes to 
Professors Futaki and Mabuchi for their pioneering works as well as  their precious and 
encouraging supports. 
They have been always very 
modest, and very kind to introduce me to 
analytic background of this area in various ways, 
since when I was a poor graduate student. 

For this paper, I deeply thank S.~Boucksom for helpful discussions, 
hospitality during my stay at the Institut de Jussieu from where 
one of the inspirations came and helpful comments on the exposition of the draft. 
I also thank K.~Fujita, S.~Mori, S.~Sun for the helpful comments on the draft. 

The author is partially supported by JSPS Kakenhi $\# 30700356$ 
(Wakate (B)). 










\pagebreak


\begin{thebibliography}{FGA}

\bibitem[Aub]{Aub}
T.~Aubin, 
R\'{e}duction du cas positif de l'\'{e}quation de Monge-Amp\'{e}re 
sur les vari\'et\'es k\"ahl\'eriennes compactes 
\'a la d\'emonstration d\'une in\'egalit\'e, 
J.\ Funct.\ Anal., (1984).


\bibitem[BCHM]{BCHM}
C.~Birkar, P.~Cascini, C.~Hacon, J.~Mckernan, 
Existence of minimal models of general type, 
J.~A.~M.~S. (2010).

\bibitem[BBGZ]{BBGZ}
S.~Boucksom, R.~Berman, V.~Guedj, A.~Zeriahi, 
A variational approach to complex Monge-Amp\'{e}re equations, 
Publ.\ Math.\ IHES. (2013)

\bibitem[BdFF]{BdFF}
S.~Boucksom, T.~de Fernex, C.~Favre, 
Volume of isolated singularities, 
Duke Math.\ J.\ vol.\ \textbf{161}, no.\ 8, 
1455-1520 (2012). 

\bibitem[BFJ]{BFJ}
S.~Boucksom, C.~Favre, M.~Jonsson, 
Singular semipositive metrics in non-archimedian geometry, 
arXiv:1201.0187 (2012). 

\bibitem[BFJ2]{BFJ2}
S.~Boucksom, C.~Favre, M.~Jonsson, 
Solution to a non-archimedian Monge-Amp\'{e}re equation, 
arXiv:1201.0188 (2012). 

\bibitem[CL]{CL}
P.~Cascini, G.~La Nave, 
K\"ahler-Ricci Flow and the Minimal Model Program for Projective Varieties, 
arXiv:0603064 (2006). 

\bibitem[Don]{Don}
S.~Donaldson, 
Scalar curvature and stability of toric varieties, 
J.\ Diff.\ Geom.\ (2002). 

\bibitem[Don.l]{Don.l}
S. K. Donaldson, K\"ahler metrics with cone singularities along a divisor, arXiv:1102.1196.

\bibitem[FR]{FR}
J.~Fine, J.~Ross, 
A note on the positivity of CM line bundles, 
I. M. R. N. 2006 (2006). 

\bibitem[Fuj]{Fuj}
O.~Fujino, 
Semipositivity theorems for varieties with trivial canonical divisor, 
Proc.\ Japan Acad.\ vol.\ \textbf{87}, Ser., A (2011). 

\bibitem[Ful]{Ful}
M.~Fulger, 
Local volumes on normal algebraic varieties, 
arXiv:1105.2981 (2011). 

\bibitem[Fut]{Fut}
A.~Futaki, 
An obstruction to the existence of Einstein K\"{a}hler metrics, 
Invent.\ Math., vol.\ \textbf{73} (1983), 437-443. 

\bibitem[Kal]{Kal}
A-S. Kaloghiros, 
Relations in the Sarkisov Program, Compositio Math.\ (2012). 

\bibitem[Kol]{Kol}
J.~Koll\'ar, 
Projectivity of Complete Moduli, 
J.\ Diff.\ Geom.\  Volume 32, Number 1 (1990), 235-268. 

\bibitem[KM]{KM}
J.~Koll\'ar, S.~Mori, 
Birational Geometry of Algebraic Varieties, Cambridge Univ.\ Press (1998). 

\bibitem[KS]{KS}
M.~Kontsevich, J.~Soibelman, 
Affine structures and non-archimedean analytic spaces, 
The Unity of Mathematics Progress in Mathematics vol.\ \textbf{244}, 
(2006). pp 321-385. 

\bibitem[KT]{KT}
M.~Kontsevich, Tschinkel, 
Non-archimedian K\"ahler geometry, 
unfinished manuscript (2002).  


\bibitem[LX]{LX}
C.~Li, C.~Xu, 
Special test configurations and K-stability of Fano varieties, 
Ann.\ of Math.\ (2014). 

\bibitem[Mab]{Mab}
T.~Mabuchi, 
$K$-energy maps integrating Futaki invariants. Tohoku Math.\ Journal 
vol.\ \textbf{38}, no.\ 4, 575-593 (1986). 

\bibitem[Od0]{Od0}
Y.~Odaka, 
The GIT stability of polarized varieties via discrepancy, 
Ann.\ of Math.\ (2013). 

\bibitem[Od1]{Od1}
Y.~Odaka, 
A generalization of Ross-Thomas slope theory, 
Osaka J.\ Math.\ (2013). 

\bibitem[OS]{OS}
Y.~Odaka, S.~Sun, 
Testing log K-stability via blowing up formalism, 
arXiv:1112.1353 (2011). 

\bibitem[Od2]{Od2}
Y.~Odaka, 
On the moduli of K\"{a}hler-Einstein Fano manifolds, 
Proceeding of Kinosaki symposium (2013). 


\bibitem[OX]{OX}
Y.~Odaka, C.~Xu, 
Log canonical models of singular pairs and its applications, 
Math.\ Res.\ Let.\ (2012). 

\bibitem[PT]{PT}
S.~Paul, G.~Tian, 
CM Stability and the Generalized Futaki Invariant I, 
arXiv:0605278 (2006). 

\bibitem[RT]{RT}
J.~Ross, R.~Thomas, 
A study of Hilbert-Mumford numerical criterion of projective varieties, 
J.\ A.\ G.\ (2006). 

\bibitem[Sho]{Sho}
V.~Shokurov, 
Prelimiting flips,
Tr.\ Mat.\ Inst.\ Steklova (2003)

\bibitem[ST]{ST}
J.~Song, G.~Tian, 
K\"ahler-Ricci flows through singularities, 
arXiv:0909.4898 (2009). 

\bibitem[Tia]{Tia}
G. Tian, K\"{a}hler-Einstein metrics with positive scalar curvature, 
Invent.\ Math.\ vol.\ \textbf{130} (1997). 


\bibitem[Wan]{Wan}
X.~Wang, 
Heights and GIT weights, 
Math.\ Res.\ Letters (2014). 

\bibitem[WX]{WX}
X.~Wang, C.~Xu,
Non-existence of asymptotic GIT compactification, 
Duke Math.\ (2014). 

\bibitem[Yau]{Yau}
S.~T.~Yau, 
Problem section in Seminar on Differential Geometry, Ann.\ of Math.\ Stud.\ vol.\ 
\textbf{102}, Princeton Univ.\ Press, pp. 669-706, (1982). 

\bibitem[Zha]{Zha}
Y.~Zhang, 
On the Volume of Isolated Singularities, to appear in Compositio Math. 

\end{thebibliography}
\end{document}